\numberwithin{equation}{section}
\theoremstyle{plain}
\newtheorem*{theorem*}{Theorem}
\newtheorem*{lemma*}{Lemma}
\newtheorem{theorem}[equation]{Theorem}
\newtheorem{proposition}[equation]{Proposition}
\newtheorem{corollary}[equation]{Corollary}
\theoremstyle{definition}
\newtheorem{definition}[equation]{Definition}
\newtheorem*{definition*}{Definition}
\newtheorem{question}[equation]{Question}
\newtheorem{notation}[equation]{Notation}
\newtheorem*{notation*}{Notation}
\newtheorem*{remark*}{Remark}
\title{Chromatic quasisymmetric functions of the path graph}
\author[Aliniaeifard, Asgarli, Esipova, Shelburne, van Willigenburg, Whitehead McGinley]{Farid Aliniaeifard, Shamil Asgarli, Maria Esipova, \\  Ethan Shelburne, Stephanie van Willigenburg, Tamsen Whitehead McGinley}
\date{\today}
\address{Farid Aliniaeifard: University of British Columbia, Vancouver, BC, Canada}
\email{farid@math.ubc.ca}
\address{Shamil Asgarli: Santa Clara University, Santa Clara, CA, USA}
\email{sasgarli@scu.edu}
\address{Maria Esipova: University of British Columbia, Vancouver, BC, Canada}
\email{mesipova@math.ubc.ca}
\address{Ethan Shelburne: University of British Columbia, Vancouver, BC, Canada}
\email{emshelburne@math.ubc.ca}
\address{Stephanie van Willigenburg: University of British Columbia, Vancouver, BC, Canada}
\email{steph@math.ubc.ca}
\address{Tamsen Whitehead McGinley: Santa Clara University, Santa Clara, CA, USA}
\email{tmcginley@scu.edu}
\subjclass{Primary: 05C05, 05E05; Secondary: 05C20, 05C25, 16T30}
\keywords{chromatic quasisymmetric function, path, ribbon tableau, symmetric function}
\thanks{The first, third, fourth, and fifth authors were supported in part by the Natural Sciences and Engineering Research Council of Canada}
\begin{document}

\begin{abstract} We show that the chromatic quasisymmetric function (CQF) of a labeled path graph on $n$ vertices is \emph{not} symmetric unless the labeling is the natural labeling $1, 2, ..., n$ or its reverse $n, ..., 2, 1$. We also show that the star graph $K_{1, n-1}$ with $n\geq 3$ has a nonsymmetric CQF for all labelings. 
\end{abstract}

\maketitle

\section{Introduction}

R. Stanley \cite{Stan95} defined the chromatic symmetric function of a graph as an infinite power series generalization of the chromatic polynomial: if a graph $G$ has vertex set $V=\{v_1, \ldots, v_n\}$, with $\mathcal{PC}(G)$ denoting its proper colorings using colors from $\mathbb P=\{1,2, \dots\}$, then its \emph{chromatic symmetric function} is 
$$
X(G; \mathbf{x}) = \sum_{c\in\mathcal{PC}(G)} \mathbf{x}^{c},
$$
where $\mathbf{x}^c = x_{c(v_1)} x_{c(v_2)}\cdots x_{c(v_n)}$ with $c(v_i)$ the color of vertex $v_i$. 

The chromatic quasisymmetric function of a graph is a generalization of the chromatic symmetric function. It was introduced by Shareshian and Wachs \cite{SW16} as a tool to solve the Stanley-Stembridge Conjecture \cite{StanStem} given in terms of the chromatic symmetric function in \cite{Stan95}, which is a central area of research in algebraic combinatorics. Special cases of this conjecture have been solved, for example \cite{ChoHuh, GebSag, MM, HuhNamYoo}. More recently, Hikita has a preprint \cite{H24} proving the Stanley-Stembridge conjecture in full generality. For the chromatic quasisymmetric function, we have a graph $G=(V,E)$, with $|V|=n$, whose vertices are labeled with $[n] = \{1, 2, ..., n\}$. More precisely, we have a bijective map ${L}\colon V\to [n]$ that assigns a \emph {label} to each $v\in V$. The data $(G, {L})$ constitute a \emph{labeled graph.} Instead of differentiating between the vertices and the labels, we will often identify the vertex set $V=\{v_1, v_2,. \ldots, v_n\}$ with $[n]=\{1, 2, \ldots ,n$\}.

Given a coloring $c\in \mathcal{PC}(G)$, the \emph{ascent set} of $c$ is:
$$
\operatorname{Asc}_G(c) = \{ij \in E \ | \ i<j \text{ and } c(i) < c(j) \}. 
$$
The \emph{ascent number} of $c$ is $\operatorname{asc}_G(c) = \# \operatorname{Asc}_G(c)$. Similarly, the \emph{descent set} of $c$ is $$\operatorname{Des}_G(c)=\{ij \in E \ | \ i<j \text{ and } c(i) > c(j) \}.$$ The \emph{descent number} of $c$ is $\operatorname{des}_G(c) = \# \operatorname{Des}_G(c)$. If the graph $G$ is clear from the context, we omit the subscript $G$. As above, we denote by $\mathbf{x}^{c}$ the monomial $x_{c(1)} x_{c(2)}\cdots x_{c(n)}$ where $c(i)$ is the color of the vertex labeled $i\in [n]$.

The \emph{chromatic quasisymmetric function (CQF)} of $G$ is 
$$
X(G; \mathbf{x}, q) = \sum_{c\in\mathcal{PC}(G)} \mathbf{x}^{c} q^{\operatorname{asc}(c)}.
$$
We can express $X(G; \mathbf{x}, q)$ as 
$$
X(G; \mathbf{x}, q)=\sum_{k\geq 0} X_k(G; \mathbf{x}) q^k,
$$ 
where each $X_k(G; \mathbf{x})$ is a quasisymmetric function of degree $n$. Moreover, $X(G; \mathbf{x}, q)$ is a polynomial in $q$ with degree equal to the number of edges of $G$. Finally, the chromatic quasisymmetric function refines the chromatic symmetric function as $X(G; \mathbf{x}, 1)=X(G; \mathbf{x})$. See \cite[Page 502]{SW16}.

In general, there is no known classification of labeled graphs whose chromatic quasisymmetric functions have coefficients that are symmetric. In this paper, we completely resolve this question for the path graph $P_n$. We also discuss the star graph $K_{n-1, 1}$ and give a sufficient (but not necessary) condition for an arbitrary tree to have a nonsymmetric CQF (see Corollary~\ref{cor:nonsymmetric-tree}).

The \emph{natural labeling} of $P_n$ is $1$, $2$, ..., $n$ or its reverse $n, ..., 2, 1$, since reversing labels on the path graph leads to an isomorphic labeled graph. More precisely, if $(P_n, L)$ is a labeled path, then it has a natural labeling if $L(v_i)=i$ for all $i$ or $L(v_i)=n+1-i$ for all $i$. For example, the two isomorphic natural labelings for $P_5$ are:
\begin{center}
\begin{tikzpicture}
    \foreach \i/\label in {1/1, 2/2, 3/3, 4/4, 5/5} {
        \node[draw, circle, inner sep=2pt] (v1\i) at (\i,0) {\label};
    }
    
    \foreach \i in {1,...,4} {
        \draw (v1\i) -- (v1\the\numexpr\i+1\relax);
    } 

    \foreach \i/\label in {1/5, 2/4, 3/3, 4/2, 5/1} {
        \node[draw, circle, inner sep=2pt] (v2\i) at (\i+6,0) {\label}; 
    } 
    
    \foreach \i in {1,...,4} {
        \draw (v2\i) -- (v2\the\numexpr\i+1\relax);
    }
    
\end{tikzpicture}.
\end{center}
The path graph below has a ``non-natural'' labeling:
\begin{center}
\begin{tikzpicture}

    \foreach \i/\label in {1/3, 2/2, 3/5, 4/1, 5/4} {
        \node[draw, circle, inner sep=2pt] (v1\i) at (\i,0) {\label};
    }
    
    \foreach \i in {1,...,4} {
        \draw (v1\i) -- (v1\the\numexpr\i+1\relax);
    }
    \end{tikzpicture}.
\end{center}

Shareshian and Wachs \cite{SW16} proved that $X(P_n; \mathbf{x}, q)$ is symmetric if $P_n$ is labeled with the natural labeling. In this paper, we prove the converse statement, which leads to the following result:

\begin{theorem}\label{thm:main} The chromatic quasisymmetric function of $P_n$ is symmetric if and only if $P_n$ has the natural labeling.
\end{theorem}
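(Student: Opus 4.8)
The plan is to handle the two implications of the biconditional separately. The ``if'' direction is precisely the theorem of Shareshian and Wachs recalled above \cite{SW16}, so all the work lies in the ``only if'' direction, which I would establish in contrapositive form: \emph{if $P_n$ carries a labeling that is not natural, then $X(P_n;\mathbf x,q)\notin\operatorname{Sym}[q]$}. Since $X(P_n;\mathbf x,q)=\sum_{k\ge0}X_k(P_n;\mathbf x)\,q^k$ lies in $\operatorname{Sym}[q]$ exactly when every coefficient $X_k(P_n;\mathbf x)$ is a symmetric function, it suffices to exhibit one $k$ for which $X_k$ is not symmetric; I would take $k=0$, i.e.\ the generating function $X_0(P_n;\mathbf x)=\sum_{c:\,\operatorname{asc}(c)=0}\mathbf x^c$ of the ascent-free proper colorings.

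The first step is a clean description of $X_0$. Read the labels along the path as a word $w=w_1w_2\cdots w_n$, a permutation of $[n]$ that is monotone exactly when the labeling is natural. An edge $\{a,b\}$ with $a<b$ contributes nothing to $\operatorname{asc}(c)$ iff $c(a)>c(b)$; applying this to the edge joining the $j$-th and $(j+1)$-th vertices of the path and writing $c_j:=c(w_j)$ shows that $\operatorname{asc}(c)=0$ iff $c_j>c_{j+1}$ whenever $w_j<w_{j+1}$ and $c_j<c_{j+1}$ whenever $w_j>w_{j+1}$. Since $\mathbf x^c=x_{c_1}\cdots x_{c_n}$, setting $S:=\{j\in[n-1]:w_j<w_{j+1}\}$ one gets
$$X_0(P_n;\mathbf x)=\mathcal R_S:=\sum_{\substack{c_1,\dots,c_n\ge1\\ c_j>c_{j+1}\ (j\in S),\ c_j<c_{j+1}\ (j\notin S)}}x_{c_1}\cdots x_{c_n},$$
so $X_0$ depends on the labeling only through $S$, and when $S=\varnothing$ or $S=[n-1]$ it reduces to the elementary symmetric function $e_n$, in agreement with the ``if'' direction. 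The claim to prove is thereby reduced to: \emph{if $\varnothing\ne S\ne[n-1]$ then $\mathcal R_S$ is not symmetric.}

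For this I would argue as follows. Since $w$ is not monotone, $S$ is neither empty nor all of $[n-1]$, so the inequality pattern has a ``kink'': an index $i$ with $i\in S$ and $i+1\notin S$ (a peak, $c_i>c_{i+1}<c_{i+2}$) or with $i\notin S$ and $i+1\in S$ (a valley); reversing the sequence $c$ gives $\mathcal R_S=\mathcal R_{S'}$ for the reversed index set (matching the path-reversal isomorphism), so either form of the kink may be used. A quasisymmetric function is symmetric iff, in the monomial quasisymmetric expansion, the coefficient of $M_\alpha$ depends only on the partition underlying $\alpha$; I would test this on the shape $(2,1^{n-2})$ by comparing, for $1\le v\le n-1$, the numbers
$$N^{(v)}:=\bigl[M_{(1^{\,v-1},\,2,\,1^{\,n-1-v})}\bigr]\,\mathcal R_S,$$
each of which counts the sequences $c$ above that are surjective onto $\{1,\dots,n-1\}$ with the value $v$ occurring exactly twice and every other value once. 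If $\mathcal R_S$ were symmetric, all of the $N^{(v)}$ would agree. They do not: in a strict pattern of this kind the value at a position is constrained by the position's height in the zigzag determined by $S$ -- the value $1$ can occur only at local minima of the pattern, the value $n-1$ only at local maxima, and an intermediate value is constrained from both sides near a peak or a valley -- and carrying out the count one finds that either $N^{(1)}\ne N^{(n-1)}$ (this happens precisely when $S$ contains exactly one of $1$ and $n-1$, equivalently when the pattern has unequal numbers of local minima and maxima) or else $N^{(v)}\ne N^{(1)}$ for a suitable $v$ strictly between $1$ and $n-1$ (necessarily $n\ge4$ in that case). The smallest instances illustrate both possibilities: for $n=3$ and $S=\{1\}$ one has $N^{(1)}=0\ne1=N^{(2)}$, while for $n=4$ and $S=\{1,3\}$ one has $N^{(1)}=N^{(3)}=2$ but $N^{(2)}=1$.

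The main obstacle I anticipate is this last count: converting the purely local fact that the pattern has a peak or a valley into a genuine inequality among the \emph{global} quantities $N^{(v)}$, since the remainder of the zigzag must be filled in compatibly and $N^{(v)}$ depends on the whole of $S$, not just its behaviour near the kink. I would expect to need either an explicit combinatorial expression for the $N^{(v)}$ in terms of the run structure of $S$ (that is, of the peaks and valleys of the pattern), or an induction on $n$ with base case $n=3$ above: given a non-monotone $w$ with $n\ge4$, one checks that deleting a suitably chosen endpoint of the path and standardizing produces a shorter path whose labeling is again not natural, and one must then show that the asymmetry of its $X_0$ propagates back to the asymmetry of the original $X_0$.
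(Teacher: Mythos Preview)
Your reduction is sound and is essentially the same starting point as the paper's: looking only at the extreme coefficient $X_0$ is dual (via the flip $i\mapsto n+1-i$, which swaps ascents and descents) to the paper's choice to look at $X_{|E|}$, and your description $X_0=\mathcal R_S$ as the generating function for strict zigzag fillings of the ribbon is exactly the ribbon-tableau picture the paper sets up in Section~\ref{sec:ribbon}. So the two approaches diverge only at the combinatorial step, not at the reduction.

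The genuine gap is precisely the one you flag yourself. You assert that the $N^{(v)}$ are not all equal, but you do not prove it; the two illustrative computations for $n=3,4$ are correct, but neither of your proposed mechanisms is carried out. The induction sketch in particular does not work as stated: there is no evident relation between $\mathcal R_S$ and $\mathcal R_{S'}$ (with $S'$ obtained by deleting an endpoint) that transports non-symmetry, since the coefficients $N^{(v)}$ for $\mathcal R_S$ are global counts over the whole zigzag and do not factor through the truncated one. Likewise, ``carrying out the count'' is not routine: the number of completions of a zigzag with two copies of $v$ placed depends delicately on which pair of positions receives $v$ and on the entire run structure of $S$, so an inequality between two $N^{(v)}$'s needs a real argument.

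It is instructive to compare with how the paper closes this gap. Rather than a single palette shape $(2,1^{n-2})$, the paper chooses palettes adapted to the ribbon: it first disposes of ribbons with unequal numbers of $LU$ and $RL$ corners (Proposition~\ref{prop:LURL}), then handles ``stacked-row'' ribbons via a bijection between palettes $(k,1,\dots,1)$ and $(1^{b-1},k,1,\dots,1)$ where $k$ is the number of $LU$ corners (Proposition~\ref{prop:stack-rows}), and finally treats the remaining ribbons by locating a specific sub-ribbon and comparing $(k{+}1,1,\dots)$ with $(1,k{+}1,1,\dots)$ or $(k,1,\dots)$ with $(1,k,1,\dots)$ (Propositions~\ref{prop:113-subribbon} and~\ref{prop:regular}). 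The moral is that the ``right'' repeated multiplicity is $k$, the number of corners, not $2$; with that choice the extreme colourings are rigid enough that one can build explicit injections or exhibit a coloring with no preimage. If you want to rescue your uniform-$N^{(v)}$ strategy, you would need either a closed formula for $N^{(v)}$ in terms of the run decomposition of $S$, or an injection argument of the same flavour as the paper's; absent that, the proposal is a plan rather than a proof.
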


The principal strategy is to associate to each labeled path graph $P_n$ a \emph{ribbon diagram} with $n$ boxes, which encodes the \emph {ascent-descent pattern} linked with the labeling of the path. Section~\ref{sec:ribbon} provides details about this approach.

\textbf{Outline.} In Section~\ref{sec:prelim}, we give  essential definitions and provide background. In Section~\ref{sec:ribbon}, we associate a ribbon tableau $RD(P_n)$ to a given colored labeled path. We prove our main Theorem~\ref{thm:main} in Section~\ref{sec:path}. Finally, in Section~\ref{sec:other-trees}, we treat the CQF of the star graph $K_{1, n-1}$ and present a general result for a class of trees. 

\medskip 

\textbf{Acknowledgments.} We are grateful to the referees for their detailed comments, subtle corrections, and pertinent suggestions, and to Maria Gillespie, Joseph Pappe, and Kyle Salois for helpful discussions.

\section{Preliminaries}\label{sec:prelim}

This section reviews the necessary definitions and background material for chromatic quasisymmetric functions (CQFs). We discuss both symmetric and palindromic CQFs. As a sufficient test for a CQF to be palindromic, we study the properties of the \emph{flip map} on the vertex set, given by $i\mapsto n+1-i$.

Following the development and notation in \cite{Sag20}, let $\mathbf{x}=\{x_1, x_2, \dots\}$ be a countably infinite set of commuting variables, and let $\mathbb C[[\mathbf{x}]]$ be the algebra of formal power series in $\mathbf{x}$.  We say $f(x) \in \mathbb C[[\mathbf{x}]]$ is \emph{homogeneous of degree n} if the sum of the exponents of each monomial in $f(x)$ equals $n$, and $f(x)$ is \emph{symmetric} if the coefficient of the monomial  $x_{i_1}^{m_1}x_{i_2}^{m_2}{\cdots} x_{i_\ell}^{m_\ell}$ (for $i_1, i_2,\ldots {,i_\ell}$ distinct subscripts) is the same as the coefficient of $x_{1}^{m_1}x_{2}^{m_2}{\cdots} x_{\ell}^{m_\ell}$ in $f(x)$.

Let $$\operatorname{Sym_n}= \{ {f(x)} \in \mathbb C[[\mathbf{x}]]  \ | \ {f(x)} \text{ is symmetric and homogeneous of degree }n\}.$$
Then \emph{the algebra of symmetric functions} is $$\operatorname{Sym}=\bigoplus_{n\ge 0}\operatorname{Sym}_n.$$
$\operatorname{Sym}$ is well-studied, and there are several interesting bases indexed by integer partitions.  The reader is referred to \cite{Sag20}, \cite{Stan23}, and \cite{Mac15} for a wealth of information on symmetric functions.

We say that $f(x)\in\mathbb C[[\mathbf{x}]]$ is \emph{quasisymmetric} if the coefficient of $x_{i_1}^{m_1}x_{i_2}^{m_2}{\cdots} x_{i_\ell}^{m_\ell}$ for $i_1< i_2<\dots <i_\ell$ is the same as the coefficient of $x_{1}^{m_1}x_{2}^{m_2}{\cdots} x_{\ell}^{m_\ell}$ in $f(x)$.
Let $$\operatorname{QSym_n}= \{ {f(x)}\in \mathbb C[[\mathbf{x}]]  \ | \ {f(x)} \text{ is quasisymmetric and homogeneous of degree }n\}.$$
Then the \emph{algebra of quasisymmetric functions} is $$\operatorname{QSym}=\bigoplus_{n\ge 0}\operatorname{QSym}_n.$$
Note that every symmetric function is also quasisymmetric, but the converse is false.  Bases of $\operatorname{QSym}$ are indexed by integer compositions.  

Given a composition $\alpha=(\alpha_1, \alpha_2, {\ldots}, \alpha_{\ell})\vDash n$, the \emph{monomial quasisymmetric function} $M_{\alpha}\in \operatorname{QSym}_n$ is defined by:
$$
M_{\alpha} = \sum_{i_1<i_2<\cdots < i_{\ell}} x_{i_1}^{\alpha_1} {x_{i_2}^{\alpha_2}}\cdots x_{i_{\ell}}^{\alpha_{\ell}}.
$$
Then, $\operatorname{QSym}_{n}$ is a vector space of dimension $2^{n-1}$, with basis $\{M_{\alpha}\ | \ \alpha \vDash n\}$ for $n\geq 1$ and $\operatorname{QSym}_{0}$ is spanned by $\{ M_{0} =1 \}$, where $0$ stands for the empty composition.

Given $\alpha = (\alpha_1, \alpha_2,\dots, \alpha_\ell) \vDash n$, the \emph{reverse} of $\alpha$ is the composition $\alpha^r = (\alpha_\ell, \alpha_{\ell-1},\dots, \alpha_1)$. Define an involution $\rho$ on $\operatorname{QSym}_{n}$ by assigning
$$\rho(M_\alpha) = M_{\alpha^r},$$
on the basis elements and extending linearly. The map $\rho$ further extends to an involution of $\operatorname{QSym}_{n}[q]$.

In this paper, we focus on two special classes of chromatic quasisymmetric functions.
\begin{itemize}
\item We say that a CQF $X(G;\mathbf{x}, q)$ is \emph{symmetric} if $X(G;\mathbf{x}, q) \in \operatorname{Sym}[q]$, meaning that the coefficients are symmetric functions (not just quasisymmetric functions). 
\item We say that a CQF $X(G;\mathbf{x}, q)$ is \emph{palindromic} if $X(G;\mathbf{x}, q)$ is a palindromic polynomial in $q$, meaning that the coefficient (living in $\operatorname{QSym}$) of $q^i$ equals the coefficient of $q^{m-i}$ where $m$ is the number of edges in $G$. 
\end{itemize}

An equivalent definition for symmetry of $X(G;\mathbf{x}, q)$ is that the coefficient of $M_\alpha$ in $X(G;\mathbf{x}, q)$ is equal to the coefficient of $M_\beta$ (as polynomials in $q$) where $\beta$ is any rearrangement of $\alpha$.

Shareshian and Wachs \cite[Proposition 2.6]{SW16} show that $\rho$ reverses the coefficients of a CQF as a polynomial in $q$, which we will use in Proposition \ref{prop:sum_Q_Q'}. 
Hence, a CQF $Q$ is palindromic if and only if $\rho(Q)=Q$. It is straightforward to check that a necessary condition for $Q$ to be palindromic is that the coefficient of $M_\alpha$ in $Q$ is equal to the coefficient of $M_{\alpha^r}$ for each composition $\alpha$. 

As an illustration, consider the labeled path graph $P_4$:
\begin{center}
\begin{tikzpicture}

    \foreach \i/\label in {1/3, 2/4, 3/1, 4/2} {
        \node[draw, circle, inner sep=2pt] (v1\i) at (\i,0) {\label};
    }
    
    \foreach \i in {1,...,3} {
        \draw (v1\i) -- (v1\the\numexpr\i+1\relax);
    }
    \end{tikzpicture}.
\end{center}
The associated CQF is
\begin{align*}
&(5q^3+7q^2+7q+5)M_{(1, 1, 1, 1)} + (2q^3+q^2+q+2)M_{(1, 1, 2)}+ \\&(q^3+2q^2+2q+1)M_{(1, 2, 1)} + (2q^3+q^2+q+2)M_{(2, 1, 1)} + (q^3+1)M_{(2, 2)},
\end{align*}
which is palindromic, but not symmetric since the coefficients of $M_{(1,1,2)}$ and $M_{(1,2,1)}$ differ.

According to \cite[Corollary 2.8]{SW16}, every symmetric CQF is palindromic. When showing that a particular graph has a nonsymmetric CQF, it is sometimes feasible to show the stronger result that the CQF is nonpalindromic.

On the other hand, showing that a given CQF is palindromic appears to be a difficult task. Below, we present one criterion that guarantees a graph has a palindromic CQF. 

Recall that two labeled graphs $(G, L)$ and $(G', L')$ are \emph{isomorphic} if there exists a graph isomorphism $\omega\colon G\to G'$ such that $L'(\omega(v)) = L(v)$ for each vertex $v\in V(G)$. In this case, we write $(G, L)\cong (G', L')$.

\begin{definition} \label{def:flip_map} Let $(G, L)$ be a labeled graph with vertex set $V=\{v_1, ..., v_n\}$ with the labeling map $L\colon V\to [n]$. The \emph{flip map} $f$ sends $(G, L)$ to the labeled graph $(G', L')$ by defining $G'$ to have the vertex set $V$ and the labeling $L'(v_i)=n+1-L(v_i)$. For brevity, we write $f(G)\colonequals G'$.  We say $G$ is \emph{invariant under the flip map} if $(G, L)\cong(G', L')$.
\end{definition}

We present three examples of trees that are invariant under the flip map.

\begin{center}
\begin{tikzpicture}
    \foreach \i/\label in {1/3, 2/6, 3/1, 4/4, 5/7, 6/2, 7/5} {
        \node[draw, circle, inner sep=2pt] (v1\i) at (\i,0) {\label};
    }
    
    \foreach \i in {1,...,6} {
        \draw (v1\i) -- (v1\the\numexpr\i+1\relax);
    }
\end{tikzpicture}
\end{center}
\vskip .15in
\begin{center}
    \begin{minipage}{0.4\textwidth}
        \begin{tikzpicture}
            \node[draw, circle, inner sep=2pt] (v1) at (0,0) {4};

            \foreach \i/\label in {1/6, 2/1, 3/3, 4/7, 5/2, 6/5} {
                \node[draw, circle, inner sep=2pt] (v1\i) at (\i*60:1) {\label};
            }

            \foreach \i in {1,...,6} {
                \draw (v1) -- (v1\i);
            }
        \end{tikzpicture}
    \end{minipage}
    \begin{minipage}{0.32\textwidth}
        \begin{tikzpicture}
            \node[draw, circle, inner sep=2pt] (v1) at (0,0) {4};

            \node[draw, circle, inner sep=2pt] (v2) at (0,1) {1};
            \node[draw, circle, inner sep=2pt] (v3) at (0,-1) {7};

            \node[draw, circle, inner sep=2pt] (v4) at (1,0) {5};
            \node[draw, circle, inner sep=2pt] (v5) at (2,0) {2};

            \node[draw, circle, inner sep=2pt] (v6) at (-1,0) {3};
            \node[draw, circle, inner sep=2pt] (v7) at (-2,0) {6};

            \draw (v1) -- (v2);
            \draw (v1) -- (v3);
            \draw (v1) -- (v4);
            \draw (v4) -- (v5);
            \draw (v1) -- (v6);
            \draw (v6) -- (v7);
        \end{tikzpicture}
    \end{minipage}
\end{center}

\begin{proposition}
\label{prop:sum_Q_Q'}
Suppose $Q$ is the CQF of a labeled graph $G$, and $Q'$ is the CQF of the labeled graph $G'$, obtained from $G$ via the flip map.  Then $Q+Q'$ is palindromic.
\end{proposition}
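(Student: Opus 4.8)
The plan is to prove the stronger identity $Q'=\rho(Q)$; the proposition follows at once, because $\rho$ is an involution and so
$$\rho(Q+Q')=\rho(Q)+\rho(Q')=Q'+\rho(\rho(Q))=Q'+Q,$$
which means $Q+Q'$ is fixed by $\rho$ and hence (as recalled just before the statement) palindromic.

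To establish $Q'=\rho(Q)$, I would first set up an explicit bijection $\phi\colon\mathcal{PC}(G)\to\mathcal{PC}(G')$ by $\phi(c)=c\circ f$, where $f(i)=n+1-i$ is the flip. Since $G'$ has precisely the edges $\{f(i)f(j): ij\in E(G)\}$, a one‑line check shows $c\circ f$ is a proper coloring of $G'$ whenever $c$ is proper for $G$, and $f\circ f=\operatorname{id}$ makes $\phi$ a bijection. The monomial is unchanged: $\mathbf{x}^{\phi(c)}=\prod_{i=1}^n x_{c(n+1-i)}=\prod_{j=1}^n x_{c(j)}=\mathbf{x}^{c}$, so $c$ and $\phi(c)$ have the same color content and contribute to the same monomial quasisymmetric function.

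Next I would track the ascent statistic. Fix an edge $ij\in E(G)$ with $i<j$; the corresponding edge of $G'$ is $\{f(i),f(j)\}$, and because $f$ reverses the order on $[n]$ we have $f(i)>f(j)$, while $\phi(c)(f(i))=c(i)$ and $\phi(c)(f(j))=c(j)$. Thus this edge is an ascent of $\phi(c)$ exactly when $c(i)>c(j)$, i.e.\ exactly when $ij$ is a \emph{descent} of $c$. Since $c$ is proper, on each of the $m:=|E(G)|$ edges it has either an ascent or a descent, so $\operatorname{asc}(\phi(c))=m-\operatorname{asc}(c)$. Writing $Q=\sum_{k=0}^m X_k q^k$ we get
$$Q'=\sum_{c\in\mathcal{PC}(G)}\mathbf{x}^{c}q^{\,m-\operatorname{asc}(c)}=\sum_{k=0}^m X_k q^{\,m-k},$$
i.e.\ $Q'$ is the reversal of $Q$ as a polynomial in $q$. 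By Sagan's result that $\rho$ reverses the coefficients of a CQF as a polynomial in $q$ (and the $q$-degree of $X(G;\mathbf{x},q)$ is $m$), this reversal is exactly $\rho(Q)$, so $Q'=\rho(Q)$.

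The only place that needs genuine care is the ascent bookkeeping in the third paragraph: one must keep straight that the flip map leaves the edge \emph{set} of the underlying graph alone (it only relabels vertices) while, on every edge, interchanging the roles of ``smaller label'' and ``larger label'' — and that is precisely what converts each ascent into a descent and vice versa. Everything else is formal. One could in fact bypass $\rho$ entirely: from $Q+Q'=\sum_k(X_k+X_{m-k})q^k$ the coefficient of $q^k$ visibly equals the coefficient of $q^{m-k}$, but phrasing the argument through $\rho$ matches the setup already built in the preliminaries.
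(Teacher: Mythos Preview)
Your proof is correct and follows essentially the same route as the paper: both arguments establish the stronger identity $Q'=\rho(Q)$ by matching colorings of $G$ and $G'$ and observing that the flip interchanges ascents and descents, then invoke Sagan's result and the fact that $\rho$ is an involution. The only cosmetic difference is that you make the bijection $\phi(c)=c\circ f$ explicit on functions $[n]\to\mathbb{P}$, whereas the paper keeps the ``same'' coloring $c$ (thinking of it as attached to underlying vertices rather than labels) and phrases the key computation as $\operatorname{Asc}_G(c)=\operatorname{Des}_{G'}(c)$; these are two ways of saying the same thing.
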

\begin{proof}
Any proper coloring $c$ of $G$ is also a proper coloring of $G'$ (and vice-versa), corresponding to the same monomial $x^c$, as the flip map permutes the labels of the vertices of $G$, but does not change the color of each vertex. 

It is straightforward to check that $\text{Asc}_G(c)=\text{Des}_{G'}(c)$. Therefore, we also have that $\text{Asc}_{G'}(c)=\text{Des}_{G}(c)$, and so
\[
Q'=\sum_{c \in {\mathcal{PC}}(G')} x^c q^{asc_{G'}(c)}= \sum_{c \in {\mathcal{PC}}(G)} x^{c} q^{des_G(c)}=\rho(Q),
\]
where the last equality follows from \cite[Theorem 8.5.3 (b)]{Sag20}. Since $\rho$ is an involution, we then have:
\[
\rho(Q+Q')=\rho(Q+\rho(Q))=\rho(Q)+Q=Q'+Q.
\]
So, $Q+Q'$ is indeed palindromic.
\end{proof}

The proposition above allows us to conclude the following.
\begin{corollary}\label{cor:invariant-flip-map}
Suppose $Q$ is the CQF of a labeled graph $G$ and suppose $Q'$ is the CQF of the labeled graph $G'=f(G)$. If $G$ is isomorphic to $G'$ as labeled graphs, then $Q$ is palindromic.
\end{corollary}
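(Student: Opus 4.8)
The plan is to deduce the corollary directly from Proposition~\ref{prop:sum_Q_Q'}, the only real content being the observation that isomorphic labeled graphs have the same CQF. First I would make this precise: if $\phi$ is an isomorphism of labeled graphs from $H_1$ to $H_2$ — meaning $\phi$ is a bijection on vertex sets that respects adjacency and carries the label of each vertex to the label of its image — then $c\mapsto c\circ\phi$ is a bijection from $\mathcal{PC}(H_2)$ onto $\mathcal{PC}(H_1)$. Since the monomial $\mathbf{x}^c$ and the ascent set $\operatorname{Asc}(c)$ are defined purely in terms of the edge set, the vertex labels, and the colors — all of which are matched up by $\phi$ — this bijection preserves $\mathbf{x}^c$ and $\operatorname{asc}(c)$ term by term, whence $X(H_1;\mathbf{x},q)=X(H_2;\mathbf{x},q)$.

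Applying this to $H_1=G$ and $H_2=G'$, the hypothesis that $G$ and $G'$ are isomorphic as labeled graphs gives $Q=Q'$. By Proposition~\ref{prop:sum_Q_Q'}, $Q+Q'$ is palindromic, so $2Q$ is palindromic; and palindromicity of a polynomial in $q$ with coefficients in $\operatorname{QSym}$ is unaffected by multiplying through by the nonzero scalar $\tfrac12$, so $Q$ itself is palindromic. Equivalently, one can read off from the proof of Proposition~\ref{prop:sum_Q_Q'} that $Q'=\rho(Q)$, so $Q=Q'$ becomes $\rho(Q)=Q$, which — as recorded in the discussion preceding that proposition — is exactly the statement that $Q$ is palindromic.

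There is no substantive obstacle here: the corollary is a one-line consequence of Proposition~\ref{prop:sum_Q_Q'} once the bookkeeping is in place. The only point deserving attention is spelling out what ``isomorphic as labeled graphs'' means and checking that such an isomorphism really does intertwine the two summations defining $Q$ and $Q'$ — in particular that it preserves ascent numbers, not merely the underlying monomials — and this is immediate from the definitions.
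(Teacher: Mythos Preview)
Your proof is correct and follows essentially the same approach as the paper: both deduce $Q=Q'$ from the isomorphism hypothesis and then invoke Proposition~\ref{prop:sum_Q_Q'} (the paper writes $\rho(Q)=\rho\bigl(\tfrac{Q+Q'}{2}\bigr)=\tfrac{Q+Q'}{2}=Q$, which is exactly your ``$2Q$ is palindromic'' argument). Your explicit justification that isomorphic labeled graphs have identical CQFs is a useful clarification that the paper takes for granted.
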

\begin{proof}
    By Proposition \ref{prop:sum_Q_Q'} and the fact that $Q=Q'$:
    \[
    \rho(Q)=\rho\left(\frac{Q+Q'}{2}\right)=\frac{Q+Q'}{2}=Q,
    \]
    as desired.  \end{proof}

\section{Labeled Paths and Ribbon Tableaux}\label{sec:ribbon}


Given a labeled path graph $(P_n, L)$, its vertex set is $V=\{v_1, \ldots, v_n\}$ where  $v_i v_{i+1}$ is an edge for $1\leq i\leq n-1$. We can then define a permutation $\sigma = \sigma_1 \sigma_2 \cdots \sigma_n$ where $\sigma_i \colonequals L(v_i)$.

\begin{definition} Given a labeled path $P_n$ with associated permutation $\sigma_1\sigma_2{\cdots}\sigma_n$, the \emph{ascent-descent pattern}, or \emph{$ad$-pattern} for short , is a word $w_1w_2\cdots w_{n-1}$, in the alphabet $\{a,d\}$, with $w_i = a$ if $\sigma_i < \sigma_{i+1}$ and $w_i=d$ if $\sigma_i > \sigma_{i+1}$, for each $1\leq i\leq n-1$.
\end{definition}

For example, consider the two labelings of $P_5$ below.

\begin{center}
\begin{tikzpicture}
    \foreach \i/\label in {1/3, 2/5, 3/1, 4/4, 5/2} {
        \node[draw, circle, inner sep=2pt] (v1\i) at (\i,0) {\label};
    }
    
    \foreach \i in {1,...,4} {
        \draw (v1\i) -- (v1\the\numexpr\i+1\relax);
    }

    \foreach \i/\label in {1/5, 2/1, 3/2, 4/3, 5/4} {
        \node[draw, circle, inner sep=2pt] (v2\i) at (\i+6,0) {\label};
    }
    
    \foreach \i in {1,...,4} {
        \draw (v2\i) -- (v2\the\numexpr\i+1\relax);
    }
    
\end{tikzpicture}
\end{center}
The $ad$-pattern for the left graph is $adad$, and for the right graph, $daaa$. 

By definition, the CQF of a path is invariant under permuting the labels but maintaining the order of the labels of adjacent vertices. To understand the CQF of a labeled path, it therefore suffices to focus on its $ad$-pattern alone.

We can visualize an $ad$-pattern using a \emph {ribbon diagram} (also called a \emph{rim-hook} or \emph{border strip}) with $n$ boxes as follows. We start with a single box and then sequentially append a new box to the right (resp. above) for each $a$ (resp. $d$) in the $ad$-pattern. For example, the two graphs above have ad-patterns
\begin{center}
\begin{tikzpicture}
    \foreach \i/\label in {1/3, 2/5, 3/1, 4/4, 5/2} {
        \node[draw, circle, inner sep=2pt] (v1\i) at (\i,0) {\label};
    }
    
    \foreach \i/\j/\lbl in {1/2/$a$, 2/3/$d$, 3/4/$a$, 4/5/$d$} {
        \draw (v1\i) -- (v1\j) node[midway, above] {\lbl};
    }

    \draw[->] (3,-0.5) -- ++(0,-0.4) node[below] {$adad$};

    \foreach \i/\label in {1/5, 2/1, 3/2, 4/3, 5/4} {
        \node[draw, circle, inner sep=2pt] (v2\i) at (\i+6,0) {\label};
    }
    
    \foreach \i/\j/\lbl in {1/2/$d$, 2/3/$a$, 3/4/$a$, 4/5/$a$} {
        \draw (v2\i) -- (v2\j) node[midway, above] {\lbl};
    }
    
    \draw[->] (9,-0.5) -- ++(0,-0.4) node[below] {$daaa$};
    
\end{tikzpicture}
\end{center}
and hence ribbon diagrams

\[\begin{ytableau} \none & \none & \, \\ 
\none&\,&\,\\
\,&\,&\none\end{ytableau} \hskip .05in\text{ and }\hskip .05in
\begin{ytableau}\,&\,&\,&\,\\
\,&\none&\none&\none\\\end{ytableau}\]
respectively.

Note that we can completely describe a ribbon diagram with $n$ boxes by a composition $\alpha=(\alpha_1,\alpha_2, \ldots, \alpha_\ell)\vDash n$, where $\alpha_i$ is the number of boxes in row $i$ of the diagram, with row $1$ being the bottom row. For example, the two ribbon diagrams above have corresponding compositions of $(2,2,1)$ and $(1,4)$. We call the ribbon diagram whose composition is $\alpha$ the \emph{$\alpha$ ribbon}. For example, the left ribbon diagram above is the $(2,2,1)$ ribbon. We will also refer to the CQF of a labeled path as the CQF of its ribbon diagram.

\begin{definition}
    Any contiguous collection of boxes of a ribbon diagram $D$ is called a \emph{sub-ribbon} of $D$, or more specifically, a $\beta$ sub-ribbon, if it has composition $\beta$. 
\end{definition}
For example, the shaded boxes form a $(1,2,2)$ sub-ribbon of the ribbon diagram:
$$\begin{ytableau}
    \none & \none & \none & \none & *(lightgray) & *(lightgray) &\,&\,\\
    \none & \none & \none & *(lightgray) & *(lightgray)& \none &\none &\none\\
    \none &\,&\,& *(lightgray)&\none &\none &\none &\none
\end{ytableau}.$$

We can use a ribbon tableau to encode a coloring of a labeled path $P_n$. Given a proper coloring $c\in\mathcal{PC}(P_n)$ of a labeled path $P_n$ with vertex set $[n]$, we place the color $c(i)$ in the $i$th box of its ribbon diagram, starting with the bottom, left box, and following the ribbon, creating its \emph{ribbon tableau}. For example, below, we have two colored paths (with the color of each vertex indicated above it) and their corresponding ribbon tableaux.

\begin{center}
\begin{tikzpicture}
    \foreach \i/\label in {1/3, 2/5, 3/1, 4/4, 5/2} {
        \node[draw, circle, inner sep=2pt] (v1\i) at (\i,0) {\label};
    }
    
    \foreach \i in {1,...,4} {
        \draw (v1\i) -- (v1\the\numexpr\i+1\relax);
    }

    \foreach \i/\label in {1/1, 2/2, 3/1, 4/2, 5/1} {
        \node[above=2mm, red] at (v1\i) {\label};
    }

    \foreach \i/\label in {1/5, 2/1, 3/2, 4/3, 5/4} {
        \node[draw, circle, inner sep=2pt] (v2\i) at (\i+6,0) {\label};
    }

    \foreach \i in {1,...,4} {
        \draw (v2\i) -- (v2\the\numexpr\i+1\relax);
    }

     \foreach \i/\label in {1/1, 2/2, 3/3, 4/2, 5/3} {
        \node[above=2mm, red] at (v2\i) {\label};
    }
    
\end{tikzpicture}
\end{center}
\vskip .2in
\[\begin{ytableau} \none & \none & \,\color{red} 1 \\ 
\none&\color{red} 1&\color{red} 2\\
\color{red} 1&\color{red} 2&\none\end{ytableau} \hskip .75in
\begin{ytableau}\color{red} 2&\color{red} 3&\color{red} 2&\color{red} 3\\
\color{red} 1&\none&\none&\none\\\end{ytableau}\]
We define $RD(P_n)$ to be the ribbon diagram associated to a (labeled) path $P_n$. We define the \emph{proper coloring of $RD(P_n)$}, denoted $RT(c)$, to be the ribbon tableau associated to a proper coloring $c\in \mathcal{PC}(P_n)$ with the number in each box called its \emph{color}.

In Section \ref{sec:path}, certain boxes of $RD(P_n)$ will have special significance, so we define them now.
\begin{definition} Given a path $P_n$, whose vertices are labeled with $[n]$, and corresponding ribbon diagram $RD(P_n)$, a \emph{left-upper} ($LU$) \emph {corner} of $RD(P_n)$ is a box with no box to its left and no box above it.  A \emph{right-lower} ($RL$) \emph{corner} is a box with no box to its right and no box below it. 
\end{definition}

For example, in the two ribbon diagrams below, the left-upper corners are labeled ``$LU$,'' and the right-lower corners are labeled ``$RL$.''

\[\begin{ytableau} \none & \none & \,LU \\ 
\none&LU&RL\\
LU&RL&\none\end{ytableau} \hskip .75in
\begin{ytableau}LU&\,&\,&RL\\
RL&\none&\none&\none\\\end{ytableau}\]

Observe that the maximum ascent number of a proper coloring $c$ of a graph $G=(V, E)$ equals $|E|$. This maximum is achieved if and only if, for each edge $ij\in E$, we have $c(i)< c(j)$ whenever $i<j$. If $G=P_n$, this happens if and only if the colors of $RT(c)$ \textit{increase left to right in each row} and \textit{increase from top to bottom in each column}. In particular, the color 1 must always be assigned to an $LU$ box.

More generally, if $P_n$ does not have the natural labeling, for a given coloring $c$ and its corresponding colored ribbon tableau $RT(c)$, $\operatorname{asc}(c)$ equals the number of adjacent horizontal pairs in $RT(c)$ whose colors increase left to right plus the number of adjacent vertical pairs whose colors increase top to bottom.

\section{The chromatic quasisymmetric function of the path graph}\label{sec:path}

For any labeled graph $G$ with $n$ vertices, consider the expansion of $X(G; \mathbf{x}, q)$ in the monomial quasisymmetric function basis:
$$
X(G; \mathbf{x}, q) = \sum_{\alpha \ \vDash \ n} 
{c}_{\alpha}(q) M_{\alpha}.
$$
Note that the coefficients $c_{\alpha}(q)$ are polynomials in $q$. If $G=P_n$, then $M_\alpha$ appears in $X(P_n; \mathbf{x}, q)$ if and only if there is a proper coloring of $RD(P_n)$ that uses color $i$ $\alpha_i$ times, for $i=1,2,\dots$. We say such a coloring has \emph {content} $\alpha$. 
The coefficient of $q^k$ in $c_\alpha(q)$ equals the number of proper colorings of $P_n$ using content $\alpha$, with $k$ ascents.

For example, if $P_4$ is
\begin{center}
\begin{tikzpicture}
    \foreach \i/\label in {1/2, 2/4, 3/3, 4/1} {
        \node[draw, circle, inner sep=2pt] (v1\i) at (\i,0) {\label};
    }
    
    \foreach \i in {1,...,3} {
        \draw (v1\i) -- (v1\the\numexpr\i+1\relax);
    }
\end{tikzpicture},
\end{center}
then $RD(P_4)$ is
\[
\begin{ytableau} \none &\,\\
\none&\,\\
\,&\,\end{ytableau}
\]
and
\begin{equation*} 
\begin{split}
X(P_4, \mathbf{x},q) & = (3q^3+9q^2+9q+3)M_{(1, 1, 1, 1)}\\
& + (2q^2+3q+1)M_{(1, 1, 2)} + (q^3+2q^2+2q+1)M_{(1, 2, 1)}\\
& + (q^3+3q^2+2q)M_{(2, 1, 1)} + (q^2+q)M_{(2, 2)}.
\end{split}
\end{equation*}
We see that $c_{(1,1,2)}(q) = 2q^2+3q+1$, so there are two colorings of $RD(P_4)$ that yield $2$ ascents and use color $1$ one time, color $2$ one time, and color $3$ two times, as shown below.
\begin{center}
\begin{tikzpicture}
    \foreach \i/\label in {1/2, 2/4, 3/3, 4/1} {
        \node[draw, circle, inner sep=2pt] (v1\i) at (\i,0) {\label};
    }
    
    \foreach \i in {1,...,3} {
        \draw (v1\i) -- (v1\the\numexpr\i+1\relax);
    }

    \foreach \i/\label in {1/1, 2/3, 3/2, 4/3} {
        \node[above=2mm, red] at (v1\i) {\label};
    }

    \foreach \i/\label in {1/2, 2/4, 3/3, 4/1} {
        \node[draw, circle, inner sep=2pt] (v2\i) at (\i+6,0) {\label};
    }

    \foreach \i in {1,...,3} {
        \draw (v2\i) -- (v2\the\numexpr\i+1\relax);
    }

     \foreach \i/\label in {1/2, 2/3, 3/1, 4/3} {
        \node[above=2mm, red] at (v2\i) {\label};
    }
    
\end{tikzpicture}
\end{center}
\vskip .15in
\[
\begin{ytableau} \none &\color{red}3\\
\none&\color{red}2\\
\color{red}1&\color{red}3\end{ytableau}
\hskip .75in
\begin{ytableau} \none &\color{red}3\\
\none&\color{red}1\\
\color{red}2&\color{red}3\end{ytableau}
\]
    
In what follows, we assume that the vertices of $P_n$ are labeled with $[n]$, but not in the natural order.  Hence its $ad$-pattern is not $aa\cdots a$ or $dd \cdots d$, and so $RD(P_n)$ does not have composition $(n)$ or $(1^n)$.

We prove Theorem~\ref{thm:main} by analyzing several types of configurations or sub-ribbons that $RD(P_n)$ might contain. First, we can reduce the number of cases with the following:
\begin{proposition}\label{prop:flip}
    If a labeled path $P_n$ has a symmetric CQF, then the labeled path obtained by applying the flip map of Definition~\ref{def:flip_map} has a symmetric CQF.
\end{proposition}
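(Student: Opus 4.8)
The plan is to use the relation between the CQF of a labeled graph and the CQF of its flip that was already isolated inside the proof of Proposition~\ref{prop:sum_Q_Q'}: if $G' = f(G)$, then $X(G'; \mathbf{x}, q) = \rho\bigl(X(G; \mathbf{x}, q)\bigr)$. Applying this with $G = P_n$, the statement reduces to checking that $\rho$ preserves symmetric CQFs; in fact I will show that $\rho$ \emph{fixes} every symmetric CQF, so that the flipped path has the same CQF as $P_n$ whenever that CQF is symmetric.

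First I would verify that $\rho$ restricts to the identity on $\operatorname{Sym}[q]$. Writing $Q = X(P_n;\mathbf{x},q) = \sum_{\alpha \vDash n} c_\alpha(q)\, M_\alpha$ and assuming $Q$ is symmetric, the equivalent formulation of symmetry recorded in Section~\ref{sec:prelim} gives $c_\alpha(q) = c_\beta(q)$ whenever $\beta$ is a rearrangement of $\alpha$; in particular $c_\alpha(q) = c_{\alpha^r}(q)$, since $\alpha^r$ is a rearrangement of $\alpha$. As $\alpha \mapsto \alpha^r$ is a bijection on the compositions of $n$, reindexing then shows $\rho(Q) = \sum_{\alpha} c_\alpha(q)\, M_{\alpha^r} = Q$. (Equivalently, each monomial symmetric function $m_\lambda$ is a sum of the $M_\alpha$ over a set of compositions closed under $\alpha \mapsto \alpha^r$, hence is fixed by $\rho$, and the $m_\lambda$ span $\operatorname{Sym}_n$.)

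To conclude, let $P_n'$ be the labeled path obtained from $P_n$ via the flip map of Definition~\ref{def:flip_map}, and set $Q' = X(P_n';\mathbf{x},q)$. By the identity established in the proof of Proposition~\ref{prop:sum_Q_Q'}, $Q' = \rho(Q)$; so if $Q \in \operatorname{Sym}[q]$, then the previous paragraph yields $Q' = \rho(Q) = Q \in \operatorname{Sym}[q]$, which is exactly the assertion.

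I do not anticipate a genuine obstacle: the proposition is a short formal consequence of the already-proven fact that relabeling by the flip map enacts $\rho$ on the CQF, together with the elementary observation that $\rho$ acts trivially on symmetric functions. The two points meriting a little care are invoking the correct consequence of the proof of Proposition~\ref{prop:sum_Q_Q'} (the identity $Q' = \rho(Q)$, not merely the palindromicity of $Q + Q'$), and recording why this halves the casework for Theorem~\ref{thm:main}: the flip map swaps $a$ and $d$ throughout the $ad$-pattern of $P_n$, i.e.\ it transposes the ribbon diagram $RT(P_n)$, so when proving non-symmetry we may always normalize that ribbon accordingly.
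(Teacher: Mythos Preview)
Your proposal is correct and follows essentially the same approach as the paper: both arguments establish that $Q' = \rho(Q)$ (you cite this from the proof of Proposition~\ref{prop:sum_Q_Q'}, while the paper re-derives $X(f(P_n);\mathbf{x},q) = \sum_c \mathbf{x}^c q^{\operatorname{des}(c)}$) and then show $\rho(Q) = Q$ when $Q$ is symmetric (you prove this directly via $c_\alpha = c_{\alpha^r}$, while the paper invokes \cite[Corollary~8.5.4]{Sag20}). The differences are purely in packaging, and your version is arguably cleaner since it avoids re-establishing an identity already obtained earlier in the paper.
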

\begin{proof}
    
     We extend the definition of the flip map to a colored path graph, $P_n$. Let $f(P_n)$ be the colored, labeled path obtained from $P_n$ by replacing label $i$ with $n+1-i$, but retaining the coloring. Then 
    $$
    X(f(P_n); \mathbf{x}, q) = \sum_{c\in\mathcal{PC}(f(P_n))} \mathbf{x}^{c} q^{\operatorname{asc}(c)}
    = \sum_{c\in\mathcal{PC}(P_n)} \mathbf{x}^{c} q^{\operatorname{des}(c)}.
    $$
    By \cite[Corollary 2.7]{SW16}, since $X(P_n, \mathbf{x},q)$ is symmetric, we have that
    $$
    X(P_n, \mathbf{x},q)=\sum_{c\in\mathcal{PC}(P_n)} \mathbf{x}^{c} q^{\operatorname{des}(c)}.
    $$
    Thus, 
    \[
X(f(P_n); \mathbf{x}, q) = X(P_n; \mathbf{x}, q)
    \]
    and both are symmetric.
\end{proof}
Translating this result to ribbon diagrams, we note that the $ad$-pattern of $f(P_n)$ is obtained from the $ad$-pattern of $P_n$ by replacing each `$a$' with a `$d$' and each `$d$' with an `$a$'.  This corresponds to reflecting the ribbon diagram $RD(P_n)$ across the diagonal that starts at the lower left corner of the ribbon diagram and has slope $1$, resulting in the ``reflection'' of the ribbon diagram.

For example, if $P_n$ is 
\begin{center}
\begin{tikzpicture}
 \foreach \i/\label in {1/2, 2/4, 3/3, 4/1} {
        \node[draw, circle, inner sep=2pt] (v1\i) at (\i,0) {\label};
    }
    
    \foreach \i in {1,...,3} {
        \draw (v1\i) -- (v1\the\numexpr\i+1\relax);
    }
    \end{tikzpicture},
    \end{center}
    then $f(P_n)$ is obtained from $P_n$ by replacing vertex label $i$ with $4-i+1$:
\begin{center}
\begin{tikzpicture}
 \foreach \i/\label in {1/3, 2/1, 3/2, 4/4} {
        \node[draw, circle, inner sep=2pt] (v1\i) at (\i,0) {\label};
    }
    
    \foreach \i in {1,...,3} {
        \draw (v1\i) -- (v1\the\numexpr\i+1\relax);
    }
    \end{tikzpicture},
    \end{center}
    and
    their corresponding ribbon diagrams are
    \[
\begin{ytableau} \none &\,\\
\none&\,\\
\,&\,\end{ytableau}\
\hskip .25in \text{and}\hskip .25in
\begin{ytableau} \,&\,&\,\\
\,&\none&\none\end{ytableau},
\]
respectively, with the second ribbon diagram being the reflection of the first. 

Proposition \ref{prop:flip} allows us to conclude that any statement about the symmetry, or lack thereof, of the CQF of a ribbon diagram immediately applies to the CQF of its reflection.\\

The proposition below reduces our study to ribbon diagrams with the same number of $LU$ and $RL$ corners. Namely, it shows that the corresponding labeled paths do not have symmetric CQFs if these counts are different.  Two examples of such ribbon diagrams are: 
\[ \begin{ytableau} \none&LU&RL\\
\none&\,&\none\\
LU &RL&\none\\
RL&\none&\none\end{ytableau}\hskip .7in
\begin{ytableau} \none&\none&LU\\
\none&LU&RL\\
\none&\,&\none\\
LU&RL&\none&\none\\\end{ytableau}
\]

\begin{proposition}\label{prop:LURL}
    Let $P_n$ be a labeled graph such that the number of LU and RL corners of $RD(P_n)$ are different. Then, $X(P_n; \mathbf{x}, q)$ is not palindromic, and hence not symmetric.
\end{proposition}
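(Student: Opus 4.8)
The plan is to identify a single composition $\alpha$ whose coefficient $c_\alpha(q)$ in $X(P_n;\mathbf x,q)$ is visibly non-palindromic, by exploiting the mismatch between the number of $LU$ and $RL$ corners. Recall that the highest possible ascent number of a proper coloring of $P_n$ is the number of edges $m = n-1$, and this top degree is attained precisely by colorings whose ribbon tableau increases left-to-right along rows and top-to-bottom down columns; for such a coloring the color $1$ must sit in an $LU$ box. Dually, by the reflection/flip discussion (Proposition~\ref{prop:flip} and the $\rho$ action), the constant term of $c_\alpha(q)$ counts colorings with $\operatorname{asc}(c)=0$, equivalently $\operatorname{des}(c)=m$, i.e. ribbon tableaux that decrease along rows and up columns; for those the color $1$ must sit in an $RL$ box. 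So the natural strategy is to choose $\alpha$ so that the top-degree coefficient of $c_\alpha(q)$ literally counts the $LU$-compatible extreme fillings and the degree-zero coefficient counts the $RL$-compatible ones, and then make these counts differ by a direct combinatorial argument.

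The concrete choice I would try is the palette $\alpha$ with one box of each "distinct" color that is forced — more precisely, take the palette in which every box gets a distinct color except that we are free, and count. A cleaner route: let $r_{LU}$ and $r_{RL}$ be the numbers of $LU$ and $RL$ corners and suppose WLOG (using Proposition~\ref{prop:flip} to swap the roles of rows/columns, hence of $LU$/$RL$) that $r_{LU} > r_{RL}$. Pick $\alpha$ to be the composition recording the column-reading word structure so that the unique maximal-ascent filling is determined by a choice of strictly increasing entries along each maximal horizontal run and strictly increasing entries down each maximal vertical run; the number of such fillings with a fixed content $\alpha$ is a product of binomial-type counts, and a symmetric argument on the reflection gives the degree-$0$ count. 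I would then argue that because $RT(P_n)$ has more $LU$ than $RL$ corners, there is a content $\alpha$ for which the "increasing" extreme fillings outnumber the "decreasing" ones: intuitively, each extra $LU$ corner gives an extra independent place to put a small color, so for a suitably chosen $\alpha$ with few repeated colors, $[q^m]c_\alpha(q) \neq [q^0]c_\alpha(q)$, which by the necessary condition for palindromicity (coefficient of $q^i$ equals coefficient of $q^{m-i}$ inside each $M_\alpha$) shows $X(P_n;\mathbf x,q)$ is not palindromic, hence not symmetric by \cite[Corollary~8.5.4]{Sag20}.

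Carrying this out, the steps in order are: (1) reduce via Proposition~\ref{prop:flip} to the case $r_{LU} > r_{RL}$, noting reflection swaps $LU \leftrightarrow RL$ and ascents $\leftrightarrow$ descents, so it suffices to contradict $[q^m]c_\alpha = [q^0]c_\alpha$; (2) characterize top-degree colorings of $RT(P_n)$ as the strictly-row-increasing, strictly-column-increasing fillings, and degree-zero colorings as their reflections (strictly decreasing along rows, strictly increasing up columns) — both follow from the discussion already in Section~\ref{sec:ribbon}; (3) choose an explicit palette $\alpha$ — I expect the right choice is $\alpha = (1,1,\dots,1,2)$ or a minor variant, i.e. all colors distinct except the single repeated color forced by the ribbon having a box that must tie a neighbor — and compute both extreme coefficients as explicit products over the maximal horizontal/vertical runs; (4) show the $LU$-count strictly exceeds the $RL$-count using $r_{LU} > r_{RL}$. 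The main obstacle is step (3)–(4): making the dependence of these extreme counts on the number of $LU$ vs. $RL$ corners precise and choosing a palette $\alpha$ that genuinely separates them, rather than a palette for which the two products happen to coincide. I would handle this by localizing — picking $\alpha$ so that almost all colors are distinct, forcing the extreme fillings to be nearly rigid, so that the count reduces to counting the number of ways to resolve ties at corners, which is exactly where $LU$ and $RL$ corners enter asymmetrically.
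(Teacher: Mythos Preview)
Your proposal has the right high-level picture---look at the extreme $q$-degrees, where the constraints on color $1$ single out $LU$ versus $RL$ corners---but there is a genuine gap: you never actually choose the palette $\alpha$ or carry out the count, and you explicitly flag steps (3)--(4) as the ``main obstacle.'' The tentative choice $\alpha=(1,\ldots,1,2)$ would require comparing two \emph{nonzero} counts of extremal fillings (max-ascent versus zero-ascent), and it is not at all clear that these differ in general; you have not given an argument that they do. So as written this is an outline, not a proof.

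The paper's proof sidesteps the counting entirely by a sharper choice of palette. With $k$ $LU$ corners and $j$ $RL$ corners (say $k<j$), it takes $\alpha=(k,1,\ldots,1,j)$ and compares $[q^{|E|}]c_\alpha$ with $[q^{|E|}]c_{\alpha^r}$, i.e.\ uses the necessary condition $c_\alpha(q)=c_{\alpha^r}(q)$ for palindromicity rather than palindromicity of a single $c_\alpha$. The first is positive: color the $k$ $LU$ corners with $1$, the $j$ $RL$ corners with the largest color, and fill the rest monotonically. The second is \emph{zero} by pigeonhole: a max-ascent coloring can place color $1$ only in $LU$ corners, and there are $k<j$ of them, so the palette $(j,1,\ldots,1,k)$ cannot be realized at top degree. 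The key idea you were missing is precisely this: rather than trying to show one positive count exceeds another, pick the palette so that one side vanishes outright.
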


\begin{proof} Let $k$ be the number of $LU$ corners and $j$ be the number of $RL$ corners.
    
If $k<j$, then $q^{|E|}M_{(k, 1^{n-k-j}, j)}$ is a term of $X(P_n; \mathbf{x}, q)$. This is because we can construct a coloring $c$ with content $(k, 1^{n-k-j}, j)$ and $\operatorname{asc}(c)=|E|$ as follows. The $LU$ corners can be colored $1$, the $RL$ corners can be colored with the largest color, and all other boxes can be colored using the remaining colors so that row and column entries increase. However, $q^{|E|}M_{(j, 1^{n-k-j}, k)}$ is \emph{not} a term of $X(P_n; \mathbf{x}, q)$ because there is no way to place color $1$ in $j>k$ boxes, while maximizing ascents, since, to maximize ascents, color $1$ can only be placed in the $k$ $LU$ corners.

If $k > j$, a similar argument shows that $q^{|E|}M_{(k, 1^{n-k-j}, j)}$ is a term of $X(P_n; \mathbf{x}, q)$, but $q^{|E|}M_{(j, 1^{n-k-j}, k)}$ is not. Note that the largest color can only be placed in the $j$ $RL$ corners for maximum ascent.

In either case, we have found a composition $\alpha$ such that the coefficients of $M_{\alpha}$ and $M_{\alpha^r}$ are not equal, and hence $X(P_n; \mathbf{x}, q)$ is not palindromic.
\end{proof}

Proposition \ref{prop:LURL} was proved independently and recast in terms of acyclic directed graphs, sources and sinks in \cite[Lemma 4.2]{GPS24}. Note that our LU corners and RL corners correspond to their sources and sinks, respectively. 

We say that $i$ consecutive rows (resp. columns) that each contain at least 2 boxes is a \emph{stack} of $i$ rows (resp. columns). If all rows (resp. columns) contain at least two boxes, we say the ribbon diagram consists of \emph{stacked} rows (resp. columns). A ribbon diagram of stacked rows is shown below.

$$\begin{ytableau}\none&\none&\none&\none&\none&\,&\,&\,&\,\\
    \none&\none&\none&\,&\,&\,&\none&\none&\none\\
    \none&\,&\,&\,&\none&\none&\none&\none&\none\\
    \,&\,&\none&\none&\none&\none&\none&\none&\none\\
    \end{ytableau}$$

Our next result shows that labeled paths whose ribbon diagrams consist of stacked rows do not have a symmetric CQF.  

\begin{notation} If $f(x)$ is a formal power series, we denote the coefficient of $x^n$ in $f(x)$ by $[x^n]f(x).$\label{coeff_notation}
\end{notation}

\begin{proposition}\label{prop:stack-rows}
    Suppose that $P_n$ is a labeled path whose composition corresponding to $RD(P_n)$ is $\alpha = (\alpha_1,\alpha_2, \ldots,\alpha_k)$ with $\alpha_i \geq 2$ for each $i$ and $k \geq 2$. Then, $X(P_n; \mathbf{x}, q)$ is not symmetric. 
\end{proposition}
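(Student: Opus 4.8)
By the criterion recalled earlier, a symmetric CQF has $c_\alpha(q)=c_\beta(q)$ whenever $\beta$ is a rearrangement of $\alpha$, so it suffices to produce one pair $\beta,\gamma\vDash n$ that are rearrangements of one another with $c_\beta(q)\ne c_\gamma(q)$, and I will separate them by the coefficient of $q^{|E|}=q^{n-1}$. Recall that $[q^{n-1}]c_\beta(q)$ counts the proper colorings of $P_n$ with palette $\beta$ attaining the maximum of $|E|$ ascents, and that a coloring attains this maximum precisely when the entries of $RT(c)$ strictly increase along each row and strictly increase down each (length-two) column. For the stacked-rows ribbon $\alpha=(\alpha_1,\dots,\alpha_k)$, consecutive rows overlap in a single column that pairs the rightmost box of row $i$ with the leftmost box of row $i+1$; reading such a filling as a word $w_1\cdots w_n$ along the ribbon, these are exactly the words that strictly increase except that $w_{S_j}>w_{S_j+1}$ at the $k-1$ junction positions $S_j:=\alpha_1+\cdots+\alpha_j$.

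Since $k\ge2$ and each $\alpha_i\ge2$, we have $n\ge4$ and $2\le\alpha_1\le n-2$. Take $\beta=(2,1,\dots,1)$ and $\gamma=(1,\dots,1,2,1,\dots,1)$, each of length $n-1$, where $\gamma$ has its unique part $2$ in position $\alpha_1$; these are distinct rearrangements of one another. Thus a $\beta$-coloring uses color $1$ twice and each of $2,\dots,n-1$ once, while a $\gamma$-coloring uses color $\alpha_1$ twice and every other color in $\{1,\dots,n-1\}$ once. Writing $N(\beta),N(\gamma)$ for the numbers of max-ascent colorings with these palettes, I claim $N(\gamma)<N(\beta)$. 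In a max-ascent $\beta$-coloring, color $1$ is the minimum used, so each of its two copies occupies the leftmost box of its row and the two junctions entering those rows are automatic. In a max-ascent $\gamma$-coloring the two copies of $\alpha_1$ may sit in the interiors of their rows, and in particular the ``bottom-packed'' configuration in which row $1$ receives exactly the colors $1,2,\dots,\alpha_1$ and row $2$ begins with $\alpha_1$ is impossible, since it would force $w_{\alpha_1}=\alpha_1>\alpha_1=w_{\alpha_1+1}$ at the junction $S_1=\alpha_1$. When $k=2$ this becomes an exact computation: every split of $\{2,\dots,n-1\}$ into a size-$(\alpha_1-1)$ block for row $1$ and a size-$(\alpha_2-1)$ block for row $2$ yields a valid $\beta$-coloring (the lone junction inequality $w_{\alpha_1}>w_{\alpha_1+1}=1$ being automatic), so $N(\beta)=\binom{n-2}{\alpha_1-1}=[x^{\alpha_1-1}](1+x)^{n-2}$; and via the unique order-preserving bijection $\{1,\dots,n-1\}\setminus\{\alpha_1\}\to\{2,\dots,n-1\}$ the valid $\gamma$-colorings correspond to all such splits except the bottom-packed one, so $N(\gamma)=N(\beta)-1$.

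For $k\ge3$ the plan is the same: define a map $\Phi$ from max-ascent $\gamma$-colorings to max-ascent $\beta$-colorings by recoloring one of the two copies of $\alpha_1$ to color $1$ — chosen (for instance the copy in the higher of its two rows, when that keeps strictness and all junction inequalities) so that the image is again a valid max-ascent coloring — and note that $\Phi$ is reversible hence injective, while no $\gamma$-coloring maps to the bottom-packed $\beta$-coloring for the reason above, giving $N(\gamma)<N(\beta)$. In all cases $c_\beta(q)$ and $c_\gamma(q)$ then differ in their top coefficient although $\gamma$ is a rearrangement of $\beta$, so $X(P_n;\mathbf{x},q)$ is not symmetric. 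The main obstacle is the $k\ge3$ bookkeeping: with several junctions one must verify that some copy of $\alpha_1$ can always be recolored to $1$ without violating any junction inequality, and pin down exactly which $\beta$-colorings lie outside the image of $\Phi$ — this is the only step that does not follow essentially verbatim from the single-junction analysis.
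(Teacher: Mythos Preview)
Your argument for $k=2$ is complete and correct, and in fact coincides with the paper's proof specialized to two rows. For $k\ge3$, however, there is a genuine gap, not merely unfinished bookkeeping: the map $\Phi$ you propose is \emph{not} injective. Take $\alpha=(2,2,2)$, so $n=6$, $\alpha_1=2$, $\beta=(2,1,1,1,1)$ and $\gamma=(1,2,1,1,1)$. The two max-ascent $\gamma$-fillings with rows $(2,3),(2,4),(1,5)$ and $(2,3),(1,4),(2,5)$ are both valid (every junction inequality holds strictly), yet ``recolor the copy of $\alpha_1=2$ in the higher of its two rows to $1$'' sends each to the same $\beta$-filling $(2,3),(1,4),(1,5)$. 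The two preimages differ only in which of rows $2$ and $3$ held the original $1$ versus the recolored $2$, and the image retains no trace of that choice; so $\Phi$ collapses them, and any repair requires a genuinely different construction rather than a tie-breaking rule. You were right to flag $k\ge3$ as the sticking point, but the obstacle is structural, not clerical.

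The paper avoids this by letting the repeated color appear $k$ times rather than twice: it compares the top-degree coefficients of $M_{(k,1,\ldots,1)}$ and $M_{(1^{b-1},k,1^{n-k-b+1})}$ for a suitable $b$. With $k$ copies, every max-ascent filling places exactly one per row, so the map $\zeta$ (in each row not already containing $b$, replace the $1$ by $b$ and resort) is transparently invertible. Its image $B'$ consists of fillings with strictly increasing rows but only weakly increasing columns, and one finishes by exhibiting an explicit element of $B'\setminus B$ in which two copies of $b$ collide at a junction.
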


\begin{proof}

Let $r,s$ be the lengths of any two adjacent rows of $RD(P_n)$, i.e. $(r,s)=(\alpha_i,\alpha_{i+1})$. Choose $b$ to be any integer in the interval $[r,n-k-s+2]$. Note this interval is non-empty since $(r-1)+(s-1)+k \leq n$, where the left-hand side counts the number of boxes in rows $i,i+1$, as well as the $LU$ corners in $RD(P_n)$. 

To prove that $X(P_n; \mathbf{x}, q)$ is not symmetric, we show that 
\[
[q^{|E|}M_{(k,1^{n-k})}] X (P_n ; \mathbf{x},q) > [q^{|E|}M_{(1^{b-1},k,1^{n-k-b+1})}] X (P_n ; \mathbf{x},q).
\]
Let $A$ denote the set of proper colorings of $RD(P_n)$ with content $(k, 1^{n-k})$ and maximum ascent number $|E|$. Each coloring in $A$ contributes  $q^{|E|}M_{(k,1^{n-k})}$ to $X(P_n; \mathbf{x}, q)$. Note that these colorings use color $1$ exactly $k$ times and every other color once. The set $A$ is not empty because every row has at least two boxes. Similarly, let $B$ denote the set of proper colorings of $RD(P_n)$ with content $(1^{b-1},k,1^{n-k-b+1})$ and maximum ascent number $|E|$. Each coloring in $B$ contributes $q^{|E|}M_{(1^{b-1},k,1^{n-k-b+1})}$ to $X(P_n; \mathbf{x}, q)$. Note that these colorings use color $b$ exactly $k$ times and every other color once.

We extend $B$ to $B'$, the set of (not necessarily proper) colorings of $RD(P_n)$ with content $(1^{b-1},k,1^{n-k-b+1})$ such that the rows are strictly increasing 
left to right, while the columns are weakly increasing top to bottom.

For example, consider the $(3,3,4)$ ribbon diagram.  Let $i=1$, so $r=3$ and $s=3$. Then, since $k=3$, we have $b\in[3, 6]$; we choose $b=4$.  With content $(1^3, 3, 1^4)$,
the ribbon tableau below, constructed according to the proof of this proposition, is an element of $B'$, but the coloring is not proper, so it is not an element of $B$.

$$\begin{ytableau}\none&\none&\none&\none&3&4&5&6\\
    \none&\none&4&7&8&\none&\none&\none\\
    1&2&4&\none&\none&\none&\none&\none\\
    \end{ytableau}$$

As noted previously, because every coloring $c \in A$ has a maximum ascent number, the colors of $c$ must increase in each row. Since there are $k$ rows and $k$ copies of color 1, the leftmost box of each row must be colored $1$.

Define a bijection $\zeta\colon A \to B'$ as follows: Given $c \in A$, identify the unique $k-1$ rows not containing $b$. Replace the $k-1$ occurrences of $1$'s in these rows with $b$'s and sort each row so that its colors are increasing left to right.
 For example, if
 \ytableausetup{smalltableaux}
 $$c_1=\begin{ytableau}\none&\none&\none&\none&1&3&4&8\\
    \none&\none&1&6&7&\none&\none&\none\\
    1&2&5&\none&\none&\none&\none&\none\\
    \end{ytableau},
    \hspace{0.75cm}
    \text{ then }
    \zeta(c_1)=\begin{ytableau}\none&\none&\none&\none&1&3&4&8\\
    \none&\none&4&6&7&\none&\none&\none\\
    2&4&5&\none&\none&\none&\none&\none\\
    \end{ytableau},$$
 which is in $B$.  But if
$$c_2=\begin{ytableau}\none&\none&\none&\none&1&3&7&8\\
    \none&\none&1&5&6&\none&\none&\none\\
    1&2&4&\none&\none&\none&\none&\none\\
    \end{ytableau},
    \hspace{0.75cm}
    \text{ then }
\zeta(c_2)=\begin{ytableau}\none&\none&\none&\none&3&4&7&8\\
    \none&\none&4&5&6&\none&\none&\none\\
    1&2&4&\none&\none&\none&\none&\none\\
    \end{ytableau},$$
    which is in $B'\setminus B$.
 \ytableausetup{nosmalltableaux} 
 
By construction, $\zeta(c) \in B'$ since $\zeta(c)$ could only (potentially) have a descent in a column consisting of the box at the end of a row and the box immediately above it.
\begin{align*}
\begin{ytableau}
    \none&a_1&\cdots\\
    \cdots&a_2
\end{ytableau}
\end{align*}
But every row of $\zeta(c)$ must contain color $b$ exactly once, and the rows are increasing, so $a_1 \leq b$ and $b \leq a_2$. Thus, the columns of $\zeta(c)$ weakly  increase from top to bottom.
 
By definition, each coloring $c' \in B'$ has exactly one $b$ per row, and there will be exactly one row that contains both $1$ and $b$. With this observation, 
we define the inverse $\zeta^{-1}: B' \to A$ as follows: Given $c' \in B'$, identify the $k-1$ rows not containing $1$, replace each occurrence of $b$ in these rows with $1$, and sort each row so its colors increase left to right. 

Next, we show that there exists a coloring in $B' \setminus B$, from which it follows that $$[q^{|E|}M_{(k,1^{n-k})}] X (P_n ; \mathbf{x},q) = |A| = |B'| > |B| = [q^{|E|} M_{(1^{b-1},k,1^{n-k-b+1})} ] X (P_n ; \mathbf{x},q). $$

Let  $m = n-k+1$ be the maximum color used by colorings in $B'$. We construct a coloring in $B'\setminus B$ as follows.
\begin{itemize}
\item Fill row $i$ of $RD(P_n)$ with $(1,2,\ldots,r-1,b)$ and note that the row is increasing left to right since $b \geq r$.
\item Fill row $i+1$ of $RD(P_n)$ with $(b,m-s+2, \ldots,m-1,m)$ and note that the row is increasing left to right since $b \leq n-k-s+2 = m-s+1$. 
\item 
Fill the rest of $RD(P_n)$ arbitrarily such that the coloring is in $B'$. 
\end{itemize}
Then the coloring is not in $B$ since the column (consisting of two boxes) joining rows $i$ and $i+1$ contains two adjacent copies of color $b$. \end{proof}

\begin{corollary}\label{cor:stack-columns}
    Suppose that $P_n$ is a labeled path with associated ribbon diagram consisting of at least two stacked columns. Then, $X(P_n; \mathbf{x}, q)$ is not symmetric. 
\end{corollary}

\begin{proof} The result follows by combining Proposition~\ref{prop:stack-rows} and Proposition~\ref{prop:flip}.
\end{proof}

The following two propositions show that ribbon diagrams containing certain sub-ribbons correspond to labeled paths that are not symmetric.

\begin{proposition}\label{prop:113-subribbon}
\ytableausetup{smalltableaux}
    Suppose that $P_n$ is a labeled path with vertex set $[n]$. If $RD(P_n)$ contains a $(1,1,3)$ sub-ribbon, $\begin{ytableau}
\,&\,&\,\\\,&\none&\none\\\,&\none&\none\end{ytableau},$ begins (on the lower left) with the $(1,3)$ sub-ribbon, $\begin{ytableau} \,&\,&\,\\\,&\none&\none\end{ytableau},$ or ends with the $(1,1,2)$ sub-ribbon, $\begin{ytableau}
    \,&\,\\\,&\none\\\,&\none
\end{ytableau},$
then $Q=X(P_n; \mathbf{x},q)$ is not symmetric.
\end{proposition}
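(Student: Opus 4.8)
The plan is to prove non-symmetry by comparing the coefficients of the top power $q^{|E|}=q^{n-1}$ in $Q$. As noted before Proposition~\ref{prop:flip}, $[q^{n-1}M_\gamma]Q$ is the number of proper colorings of $RT(P_n)$ with palette $\gamma$ having the maximal ascent number; by the analysis of $LU$ and $RL$ corners these are exactly the \emph{strict fillings} of $RT(P_n)$ --- the fillings strictly increasing along each row (left to right) and down each column, which necessarily place color $1$ in an $LU$ corner and the largest color in an $RL$ corner. It therefore suffices to produce two compositions that are rearrangements of one another whose $q^{n-1}$-coefficients in $Q$ differ. By Proposition~\ref{prop:LURL} we may assume in addition that $RT(P_n)$ has equally many $LU$ and $RL$ corners, since otherwise we are already done.

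A convenient family to compare is the set of \emph{doubled-color} compositions $\gamma_i=(1^{i-1},2,1^{n-i-1})\vDash n$ for $1\le i\le n-1$, all of which are rearrangements of each other. In a strict filling the two boxes carrying color $i$ cannot lie in a common row or column (strictness would be violated), hence are incomparable; replacing them by the values $i$ and $i+1$, and raising each larger color by one, produces a standard filling, and each standard filling in which the boxes of colors $i$ and $i+1$ are non-adjacent arises from exactly two such strict fillings. Consequently
\[
[q^{n-1}M_{\gamma_i}]Q=\tfrac12\,\#\bigl\{\text{standard fillings }T\text{ of }RT(P_n):\text{ the colors }i\text{ and }i+1\text{ occupy non-adjacent boxes of }T\bigr\},
\]
so symmetry of $Q$ would force this count to be independent of $i$; I would contradict this (or prove an analogous inequality between two rearranged palettes of ``hook'' type $(t,1^{n-t})$ and a rearrangement of it). In each of the three configurations I single out the corner box $x$ of the embedded sub-ribbon (the box with no box to its left and none above it, inside the sub-ribbon) together with the $a$-run and the $d$-run meeting at $x$; in any strict filling $x$ holds the strictly smallest color of its row and of its column. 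When the $(1,3)$ sits at the beginning, the bottom-left box $z_1$ of $RT(P_n)$ has $z_2$ (an $LU$ corner) as its only neighbour, so colouring $z_2$ by $1$ isolates $z_1$ in the remaining order; when the $(1,1,2)$ sits at the end, the last box $y_4$ has the $LU$ corner $y_3$ as its only neighbour, and colouring $y_3$ by $1$ isolates $y_4$ similarly; and when $RT(P_n)$ contains a $(1,1,3)$ sub-ribbon, the corner $x=z_3$ is a minimal box covered by both $z_2$ and $z_4$, so colour $1$ at $z_3$ frees the next colours to be distributed across the two branches. In each case this extra freedom makes one palette occur strictly more often than a rearrangement of it; one verifies the strict inequality directly on the smallest instances (when $RT(P_n)$ is the sub-ribbon itself, or a hook with a short tail).

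\textbf{Main obstacle.}
The difficulty is that the discrepancy is manufactured inside the sub-ribbon, whereas $RT(P_n)$ may carry many further boxes. One must therefore construct, for each configuration, an injection from the larger family of strict fillings into the smaller one whose image omits at least one filling --- exactly the role played by the map $\zeta$ and the auxiliary set $B'$ in the proof of Proposition~\ref{prop:stack-rows} --- and do so uniformly, while also dispatching the small-$n$ degenerations separately. Finally, Proposition~\ref{prop:flip} converts the three settled cases into their reflections, so that a $(3,1,1)$ sub-ribbon anywhere, a $(2,1,1)$ sub-ribbon at the beginning, or a $(3,1)$ sub-ribbon at the end likewise forces a non-symmetric CQF.
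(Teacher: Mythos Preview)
Your proposal is a strategy outline rather than a proof: you never actually construct the promised injection or verify the strict inequality, and you explicitly flag this as the ``main obstacle.'' The route through the doubled-color palettes $\gamma_i=(1^{i-1},2,1^{n-i-1})$ and the standard-filling count is correct as far as it goes, but comparing two positive counts does require a nontrivial bijective argument that respects the unknown remainder of $RT(P_n)$, and you have not provided one.

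The paper avoids this difficulty entirely by a much sharper choice of palettes. With $k$ the number of $LU$ corners, compare the rearrangements
\[
\alpha=(k+1,1,1,\ldots,1)\quad\text{and}\quad\beta=(1,k+1,1,\ldots,1).
\]
For $\alpha$, a maximal-ascent filling must place every copy of color $1$ in an $LU$ corner, so $k+1$ copies cannot fit; hence $[q^{|E|}M_\alpha]Q=0$. For $\beta$, the assumed sub-ribbon furnishes an $LU$ corner with \emph{two} neighbours (one to its right and one below), so one may color that corner $1$, its two neighbours $2$, the remaining $k-1$ $LU$ corners $2$, and fill the rest with distinct colors $3,\ldots,n-k$ increasingly; this is a legitimate maximal-ascent coloring, so $[q^{|E|}M_\beta]Q>0$. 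Thus the two coefficients differ and $Q$ is not symmetric.

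The key point you missed is that one can choose the palette so that one side is \emph{forced to be zero}; then no injection is needed and the rest of the ribbon is irrelevant. Your hook-type alternative $(t,1^{n-t})$ was in fact the right idea---take $t=k+1$---but you abandoned it in favour of the harder doubled-color comparison.
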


\begin{proof}
Let $k$ be the number of $LU$ corners of $RD(P_n)$. We show that
\[
[q^{|E|} M_{(k+1,1^{n-k-1})}]Q = 0 \quad \text{but} \quad [q^{|E|} M_{(1,k+1,1^{n-k-2})}]Q > 0.
\]
For a proper coloring with $|E|$ ascents and content $(k+1,1^{n-k-1})$, each copy of color 1 must be placed in an LU corner to respect the increasing conditions on the rows and columns. There are $k < k+1$ LU corners, so such a coloring does not exist, hence $[q^{|E|} M_{(k+1,1^{n-k-1})}]Q = 0$. 

On the other hand, let $T$ be one of the sub-ribbons in the statement. Color the $LU$ corner of $T$ with color $1$ and the two neighboring boxes with color $2$. Color the remaining $k-1$ $LU$ corners of $RD(P_n)$ with $2$, and arbitrarily color the rest of $RD(P_n)$ with the remaining distinct colors $\{3,4,\ldots, n-k\}$ such that the rows are increasing left to right and columns are increasing top to bottom. This is a proper coloring with a maximum ascent number by construction. Thus, $[q^{|E|} M_{(1,k+1,1^{n-k-2})}]Q > 0$, giving the desired conclusion.
\end{proof}

To illustrate the proof of Proposition~\ref{prop:113-subribbon}, consider the following ribbon tableau with $k=3$ $LU$ corners. The given coloring, with the $(1,1,3)$ sub-ribbon shaded, shows that $[M_{(1,4,1^7)}]Q>0$.
\ytableausetup{nosmalltableaux}
\[
\begin{ytableau}
\none&\none&\none&\none&\none&2&9\\
\none&\none&\none&\none&\none&5&\none\\
\none&\none&*(lightgray)1&*(lightgray)2&*(lightgray)4&6&\none\\
\none&\none&*(lightgray)2&\none&\none&\none&\none\\
2&3&*(lightgray)8&\none&\none&\none&\none\\
7&\none&\none&\none&\none&\none&\none\end{ytableau}
\]


\begin{corollary}\label{cor:311-subribbon}
\ytableausetup{smalltableaux}
 Suppose that $P_n$ is a labeled path with vertex set $[n]$. If $RD(P_n)$ contains a $(3,1,1)$ sub-ribbon,  $\begin{ytableau}
\none, &\none &\,\\\none&\none & \, \\\,& & \end{ytableau},$ or begins with a $(2,1,1)$ sub-ribbon, $\begin{ytableau}
\none &\,\\ \none & \, \\ & \end{ytableau},$ or ends with a $(3,1)$ sub-ribbon, $\begin{ytableau} \none&\none &\, \\ \,&\,&\,\end{ytableau},$  has a nonsymmetric CQF.
\end{corollary}

\ytableausetup{nosmalltableaux}

\begin{proof}
    The result follows by combining Proposition~\ref{prop:113-subribbon} and Proposition~\ref{prop:flip}.
\end{proof}

\begin{definition} Suppose that $P_n$ is a labeled path. We say $RD(P_n)$ is \emph{regular} if $RD(P_n)$ has a row of length $2$ followed by a row of length at least $2$, or a terminal row of length $1$. In this case, the $(2,1)$ sub-ribbon contained in these rows is called the \emph{regular} $(2,1)$ \emph {sub-ribbon}. 
\end{definition}

For example, the ribbon diagram below is regular, with two examples of regular $(2,1)$ sub-ribbons shaded.
\begin{center}
\begin{ytableau}
\none&\none&\none&*(lightgray)\\
\none&\none&*(lightgray)&*(lightgray)\\
\none&\none&&\none\\
\none&*(lightgray)&&\none\\
*(lightgray)&*(lightgray)&\none&\none\\
&\none&\none&\none\end{ytableau}
\end{center}

\begin{proposition}\label{prop:regular}
    Suppose that $P_n$ is a labeled path such that
    \begin{itemize} \ytableausetup{smalltableaux}
        \item $RD(P_n)$ does not contain a $(1,1,3)$ sub-ribbon, $\begin{ytableau}
\,&\,&\,\\\,&\none&\none\\\,&\none&\none\end{ytableau}$, does not begin with the $(1,3)$ sub-ribbon, $\begin{ytableau} \,&\,&\,\\\,&\none&\none\end{ytableau}$, does not end with the $(1,1,2)$ sub-ribbon, $\begin{ytableau}
    \,&\,\\\,&\none\\\,&\none
\end{ytableau}$, and 
        \item $RD(P_n)$ is regular.
    \end{itemize}
    Then, $Q=X(P_n; \mathbf{x},q)$ is not symmetric. 
\end{proposition}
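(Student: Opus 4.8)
The plan is to mimic the strategy of Proposition~\ref{prop:stack-rows}: exhibit two compositions $\alpha$ and $\alpha^r$ that are rearrangements of each other, such that the coefficient of $q^{|E|}M_\alpha$ strictly exceeds that of $q^{|E|}M_{\alpha^r}$ in $Q$. Again, only colorings of maximum ascent number contribute to the $q^{|E|}$ coefficient, so throughout we work only with proper colorings of $RT(P_n)$ whose rows strictly increase left to right and whose columns strictly increase top to bottom. Let $k$ be the number of $LU$ corners of $RT(P_n)$. By the two hypotheses combined with Proposition~\ref{prop:113-subribbon} and Corollary~\ref{cor:311-subribbon} (so that the ``$(1,1,3)$-type'' and ``$(3,1,1)$-type'' cases are already handled), together with Proposition~\ref{prop:stack-rows}, Corollary~\ref{cor:stack-columns} and Proposition~\ref{prop:LURL}, we may assume $RT(P_n)$ has equally many $LU$ and $RL$ corners, is not a stack of long rows or columns, and does contain a regular $(2,1)$ sub-ribbon $T$ — a row of length exactly $2$ immediately below a row of length $\ge 2$, or a terminal row of length $1$ sitting below a row of length $\ge 2$.

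The key step is to choose a color $b$ and compare $q^{|E|}M_{(k,1^{\,b-k})}$ with a rearrangement that moves the block of $k$ copies of color $1$ to the interior, say $q^{|E|}M_{(1^{\,b-1},k,1^{\,n-k-b+1})}$, exactly as in Proposition~\ref{prop:stack-rows}. As there, in any maximum-ascent coloring the $k$ copies of color $1$ must occupy the $k$ $LU$ corners, one per ``row-start,'' so the set $A$ of colorings contributing to $q^{|E|}M_{(k,1,1,\ldots)}$ is in bijection (via the sorting map $\zeta$ that swaps $1\leftrightarrow b$ in the rows not already containing $b$) with the superset $B'\supseteq B$ of colorings with palette $(1^{b-1},k,1^{n-k-b+1})$ that are row-strict and column-weak. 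The inequality $|A|=|B'|>|B|$ follows once we produce a single coloring in $B'\setminus B$, i.e. a row-strict, column-weak filling with palette $(1^{b-1},k,1^{n-k-b+1})$ that fails column-strictness somewhere. The regular $(2,1)$ sub-ribbon $T$ is exactly the local configuration we need: fill the length-$2$ row of $T$ with $(1,b)$ (or $(b_1,b)$ with $b_1<b$ when the bottom cell of $T$ is not an $LU$ corner) and put $b$ in the cell of the upper row directly above the right cell of $T$; the resulting vertical pair has color $b$ stacked on color $b$, destroying column-strictness, so the coloring lies in $B'\setminus B$. One must check that the rest of $RT(P_n)$ can be completed to an element of $B'$ with the remaining colors — this is where the excluded sub-ribbons and the matching $LU$/$RL$ counts are used, since the obstructions to extending a partial row-strict/column-weak filling are precisely the configurations already ruled out, and the constraint that $b$ lie in the admissible range $[r, n-k-s+2]$ (with $r,s$ the lengths of the two rows of the regular pair, $s\in\{1,2\}$) guarantees enough colors above and below.

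The main obstacle I expect is the bookkeeping in the extension step: verifying that, given the ``defective'' filling of the regular $(2,1)$ sub-ribbon, the remaining boxes really do admit a completion in $B'$ with the prescribed palette, using only the fact that $RT(P_n)$ avoids the three forbidden sub-ribbons and has balanced corners. Concretely I would argue by walking along the ribbon from the bottom-left, greedily assigning the smallest available color consistent with row-strictness and column-weakness, and checking that a conflict can arise only at an $LU$ corner that would force a repeat of color $b$ in a forbidden pattern — which the hypotheses preclude away from the chosen sub-ribbon $T$. A secondary subtlety is handling the terminal-row-of-length-$1$ flavor of regularity separately from the length-$2$ flavor, but in both cases the defective vertical pair of color $b$ is constructed the same way, so this should amount to a short case split rather than a genuinely new argument.
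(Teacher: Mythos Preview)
Your approach has a genuine gap: the bijection $\zeta\colon A\to B'$ lifted from Proposition~\ref{prop:stack-rows} does \emph{not} work for the ribbons appearing in Proposition~\ref{prop:regular}. In Proposition~\ref{prop:stack-rows} every row has length $\ge 2$, so the $k$ $LU$ corners are exactly the $k$ row-starts, and $\zeta$ is built on ``one copy of $1$ per row.'' Under the hypotheses of Proposition~\ref{prop:regular} the ribbon may have interior rows of length $1$, whose unique box is \emph{not} an $LU$ corner; then $k<\#\text{rows}$ and your swap-and-sort map is ill-defined (if $b$ lands in such a length-$1$ row you replace all $k$ ones by $b$'s and end up with $k+1$ copies of $b$). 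Worse, the underlying equality $|A|=|B'|$ is simply false. Take the $(1,2,2)$ ribbon (which satisfies both hypotheses: it is regular via its top two rows, and contains none of the forbidden sub-ribbons). Here $k=2$, and with $b=2$ one computes $|A|=6$ but $|B'|=8$, so no bijection exists. Your ``bookkeeping in the extension step'' worry is therefore not the main obstacle; the inequality $|A|=|B'|>|B|$ fails before you ever get to constructing an element of $B'\setminus B$.

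The paper's proof fixes $b=2$ and reverses direction: it builds an injection $\psi\colon B\to A$ (replace all $2$'s sitting in $LU$ corners by $1$'s) and shows it is not surjective. The first hypothesis is used in an essential and different way than you propose: it forces every $T\in B$ to have its $k$ copies of $2$ distributed as $k-1$ in $LU$ corners plus exactly one adjacent to the unique $1$ (otherwise a forbidden $(1,1,3)$-type sub-ribbon would appear). This is what makes $\psi$ well-defined and injective; the unique surviving $2$ in $\psi(T)$ has exactly one neighboring $1$, so $T$ is recoverable. Non-surjectivity then comes from the regular $(2,1)$ sub-ribbon, colored so that the unique $2$ is flanked by \emph{two} $1$'s, which no $\psi(T)$ can achieve. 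Note this argument never introduces the auxiliary set $B'$ at all.
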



\begin{proof}
Let $k$ be the number of $LU$ corners of $RD(P_n)$. We show that
\[
[q^{|E|} M_{(k,1^{n-k})}] Q > [q^{|E|} M_{(1,k,1^{n-k-1})}] Q.
\]
Let $A$ be the set of proper colorings of $RD(P_n)$ with content $(k, 1^{n-k})$ and maximum ascent number $|E|$, whose cardinality equals the coefficient $[q^{|E|} M_{(k,1^{n-k})}]Q$. These colorings use $k$ copies of color $1$ and each of the next $n-k$ colors once. Let $B$ denote the set of proper colorings of $RD(P_n)$ with content $(1, k, 1^{n-k-1})$ and maximum ascent number $|E|$, whose cardinality equals the coefficient $[q^{|E|} M_{(1,k,1^{n-k-1})}]Q$. These colorings use $k$ copies of color $2$ and each of the colors $\{1, 3, 4, ..., n-k-1\}$ once.

To show that $|B|<|A|$, we produce an injective map $\psi\colon B\to A$ that is \emph{not} surjective. Define $\psi\colon B \to A$ as follows: given $T\in B$, $\psi(T)$ is obtained by replacing all $2$'s in the $LU$ corners of $T$ by $1$'s. 
\ytableausetup{smalltableaux}
For example, if 
$$T=\begin{ytableau}
\none&\none&\none&2\\
\none&\none&1&3\\
\none&\none&2&\none\\
\none&2&5&\none\\
2&4&\none&\none\\
6&\none&\none&\none\end{ytableau},
\hspace{0.75cm}
\text{   then } 
\psi(T) = \begin{ytableau}\none&\none&\none&1\\
\none&\none&1&3\\
\none&\none&2&\none\\
\none&1&5&\none\\
1&4&\none&\none\\
6&\none&\none&\none\end{ytableau}.
$$

By the first hypothesis, each $T\in B$ contains $k-1$ $LU$ corners colored $2$ and one $LU$ corner colored $1$. As a result, $\psi$ is well-defined. Indeed, $\psi(T)$ has the maximum ascent number because replacing $2$s with $1$s maintains all ascents. Furthermore, $\psi$ is an injection since the \textit{unique $2$ in $\psi(T)$ is adjacent to exactly one $1$}, which uniquely determines $T$.

However, $\psi$ is not a surjection. By the second hypothesis, $RD(P_n)$ contains a regular $(2,1)$ sub-ribbon $S$. We construct a ribbon tableau $T'\in A$ not in the image of $\psi$: color the $LU$ corners of $S$ with color $1$, and its $RL$ corner with color $2$. Color the remaining $LU$ corners with color $1$ and the rest of $T'$ arbitrarily so that row and column entries strictly increase. Then $T'$ is \emph{not} in the image of $\psi$, since no coloring in $\psi(B)$ \textit{contains a $2$ that is adjacent to two $1$'s} because the purported pre-image would not be a proper coloring, as it would have two adjacent boxes colored $2$.
\end{proof}

Below is an example of such a ribbon tableau $T'$, with $S$ shaded.
\ytableausetup{nosmalltableaux}
\[\begin{ytableau}
\none&\none&\none&*(lightgray)1\\
\none&\none&*(lightgray)1&*(lightgray)2\\
\none&\none&3&\none\\
\none&1&5&\none\\
1&4&\none&\none\\
6&\none&\none&\none\end{ytableau}\]

We now give a proof of the main theorem.

\begin{proof}[Proof of Theorem~\ref{thm:main}] Since $P_n$ with the natural labeling has a symmetric CQF \cite{SW16}, we only need to prove the converse. 

Let $P_n$ be a path whose labeling is not natural. By Proposition~\ref{prop:LURL}, we can assume that $P_n$ has the same number of $LU$ and $RL$ corners. Accordingly, $RD(P_n)$ has at least a stack of two rows or a stack of two columns. If $RD(P_n)$ consists entirely of a stack of rows or a stack of columns, $X(P_n; \mathbf{x}, q)$ is not symmetric by Proposition~\ref{prop:stack-rows} and Corollary~\ref{cor:stack-columns}. Assume $RD(P_n)$ has a mix of stacks of rows and stacks of columns. By Proposition~\ref{prop:flip}, we may assume that $RD(P_n)$ starts with a stack of columns (including, possibly, columns with two boxes). Consider the first transition point from the stack of columns to the stack of rows, where we must necessarily encounter a row $\mathbf{r}$ of length $\geq 3$, colored in olive below. 
\[
\hskip .7in
\begin{ytableau}
\none &\none &*(olive) &*(olive) &*(olive)\cdots\\
\none &\none & \\
\none & & \\
\none &\vdots &\none \end{ytableau}
\]
If the column left adjacent to $\mathbf{r}$ has at least $3$ boxes (colored in cyan below), then $RD(P_n)$ has a $(1,1,3)$ sub-ribbon and $X(P_n; \mathbf{x}, q)$ is not symmetric by Proposition~\ref{prop:113-subribbon}. 
\[
\hskip .7in
\begin{ytableau}
\none &\none &*(cyan) & &\cdots\\
\none &\none &*(cyan) \\
\none & &*(cyan) \\
\none &\vdots &\none \end{ytableau}
\]
Thus, we may assume that the column $\mathbf{c}$ left adjacent to $\mathbf{r}$ has $2$ boxes. If $\mathbf{c}$ is the first column of $RD(P_n)$, then the tableau begins with:
$$
\begin{ytableau} \,&\,&\,\\
\,&\none&\none\end{ytableau}.
$$
In this case, we can apply Proposition~\ref{prop:113-subribbon} to deduce that $X(P_n; \mathbf{x}, q)$ is not symmetric. Otherwise, there is at least one more column to the left of $\mathbf{c}$. In this juncture, we have a regular $(2, 1)$ sub-ribbon: 
\[
\hskip .7in
\begin{ytableau}
\none&*(lightgray)& & \cdots\\
*(lightgray)&*(lightgray) \\
\vdots\end{ytableau}.
\]
Now, either $RD(P_n)$ contains a $(1,1,3)$ sub-ribbon, begins with a $(1,3)$ sub-ribbon, or ends with a $(1,1,2)$ sub-ribbon, so we are done by Proposition \ref{prop:113-subribbon}, or it does not, so we are done by Proposition~\ref{prop:regular}. Hence, we conclude in all possible cases that  $X(P_n; \mathbf{x}, q)$ is not symmetric. 
\end{proof}

\section{The chromatic quasisymmetric function of other trees}\label{sec:other-trees}

Theorem \ref{thm:main} shows that only the natural labeling of the path graph $P_n$ results in a symmetric CQF.  However, it is not the case that every tree will have some labeling that results in a symmetric CQF, as shown by the following theorems.

Consider the \emph{star graph} $K_{1, n-1}$ on $n\geq 3$ vertices. The \emph{central vertex} is the unique vertex of degree $>1$.

\begin{proposition}\label{prop:star-symmetry}
The CQF of a (labeled) star graph $G$ with $n$ vertices is palindromic if and only if $n$ is odd and the central vertex is labeled $\frac{n+1}{2}$.
\end{proposition}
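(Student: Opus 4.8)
The plan is to compute the CQF of a labeled star graph $G = K_{1,n-1}$ explicitly enough to read off its palindromicity. Say the central vertex has label $m$; the $n-1$ leaves then carry the labels $[n]\setminus\{m\}$, of which exactly $m-1$ are smaller than $m$ and $n-m$ are larger than $m$. For a proper coloring $c$, write $c_0$ for the color of the central vertex. An edge from a leaf $\ell$ to the center contributes to $\operatorname{asc}(c)$ precisely when the smaller-labeled endpoint has the smaller color: if $\ell < m$ this means $c(\ell) < c_0$, and if $\ell > m$ this means $c_0 < c(\ell)$. Since properness only requires each leaf color to differ from $c_0$, the ascent count splits as a sum over leaves, and the generating function factors once we fix $c_0$. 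Concretely, I would write
\[
X(G;\mathbf{x},q) = \sum_{c_0 \ge 1} x_{c_0} \left(\sum_{j < c_0} x_j + q\sum_{j > c_0} x_j\right)^{\!m-1}\!\left(\sum_{j < c_0} q\, x_j + \sum_{j > c_0} x_j\right)^{\!n-m},
\]
where the first factor handles the $m-1$ "small-labeled" leaves (ascent when the leaf color beats $c_0$) and the second handles the $n-m$ "large-labeled" leaves.

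From this product formula the key structural observation is how the flip map acts: replacing $m$ by $n+1-m$ swaps the two exponents $m-1$ and $n-m$, which is exactly the operation of sending $q \mapsto$ "reverse the roles of below/above", i.e. it realizes $\rho$ on $X(G;\mathbf{x},q)$ at the level of this formula. By Proposition~\ref{prop:sum_Q_Q'}, $X(G;\mathbf{x},q) + X(f(G);\mathbf{x},q)$ is always palindromic, and $f(G)$ is the star with central label $n+1-m$. Hence $X(G;\mathbf{x},q)$ is palindromic if and only if it equals $X(f(G);\mathbf{x},q)$ — and since two distinct labelings of the star with central labels $m$ and $n+1-m$ give genuinely different CQFs unless $m = n+1-m$ (which I will verify by extracting a single coefficient, e.g. of $M_{(n-1,1)}$ or of a suitable $x_1 x_2 \cdots$ monomial, and comparing its $q$-polynomial for the two labelings), palindromicity forces $m = n+1-m$, i.e. $n$ odd and $m = \tfrac{n+1}{2}$. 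Conversely, when $m = \tfrac{n+1}{2}$ the star is invariant under the flip map, so Corollary~\ref{cor:invariant-flip-map} immediately gives palindromicity.

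So the proof has two halves. The "if" direction is a one-line appeal to Corollary~\ref{cor:invariant-flip-map} after checking that the star with $n$ odd and central label $\tfrac{n+1}{2}$ is isomorphic to its flip. For the "only if" direction, the cleanest route is to exhibit a single composition $\alpha$ whose coefficient $c_\alpha(q)$ in $X(G;\mathbf{x},q)$ is visibly non-palindromic whenever $m \ne \tfrac{n+1}{2}$; a natural candidate is $\alpha = (n-1, 1)$, corresponding to colorings that use color $1$ on $n-1$ vertices and some color $\ge 2$ once. If the central vertex gets color $1$, every leaf has a larger color, so the ascent number is $m-1$ (one ascent per small-labeled leaf); if a leaf gets the large color and the center gets $1$... — I would enumerate the few sub-cases of which vertex is the "exceptional" one and track the ascent contributions, obtaining $c_{(n-1,1)}(q)$ as an explicit short polynomial in $q$ symmetric under $q \mapsto q^{n-1-?}$ only when $m-1 = n-m$.

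The main obstacle I anticipate is bookkeeping in the "only if" direction: making sure the chosen composition $\alpha$ actually detects the asymmetry for every $m \ne \tfrac{n+1}{2}$ and every $n \ge 3$, rather than just for "most" cases — in particular the small cases $n=3,4$ and the extreme labelings $m=1$ or $m=n$ should be checked directly, since there a leaf and the center can't both be forced into the positions the general argument wants. An alternative that sidesteps the ad hoc coefficient extraction is to argue abstractly: the full formula above shows $\rho\bigl(X(G_m;\mathbf{x},q)\bigr) = X(G_{n+1-m};\mathbf{x},q)$ where $G_m$ denotes the star with central label $m$, and then observe that the map $m \mapsto X(G_m;\mathbf{x},q)$ is injective on labelings (two labelings give the same CQF iff they have the same ad-data, which for the star means the same value of $m$ up to the flip $m \leftrightarrow n+1-m$), so $\rho$ fixes $X(G_m)$ iff $m = n+1-m$. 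I would present whichever of these is shorter after checking the base cases.
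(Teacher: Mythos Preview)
Your approach matches the paper's: for the ``if'' direction, invoke Corollary~\ref{cor:invariant-flip-map} after noting the star with central label $\tfrac{n+1}{2}$ is flip-invariant; for ``only if'', compare the coefficients of $M_{(1,n-1)}$ and $M_{(n-1,1)}$ and observe they force $m-1 = n-m$.

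However, your case analysis for the palette $(n-1,1)$ is garbled. With color $1$ used $n-1$ times, the central vertex \emph{cannot} receive color $1$: all $n-1$ leaves would then need a color different from $1$, but only one copy of color $2$ is available. There is exactly one proper coloring with this palette (center gets $2$, every leaf gets $1$), so $c_{(n-1,1)}(q) = q^{m-1}$; dually, the unique coloring with palette $(1,n-1)$ gives $c_{(1,n-1)}(q) = q^{n-m}$. Since these are single monomials and $(1,n-1)^r = (n-1,1)$, palindromicity forces $m-1 = n-m$ immediately --- there are no sub-cases to enumerate, and the argument works uniformly for all $n \ge 3$ and all $m$, so your worry about the extremes $m \in \{1,n\}$ and the small cases $n = 3,4$ is unfounded. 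The product formula you wrote is correct and gives a nice structural picture, but is not needed here.
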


\begin{proof}
Suppose the central vertex of $G$ is labeled $j$ where $1\leq j\leq n$. Consider the coefficients of the two monomial quasisymmetric functions $M_{(1, n-1)}$ and $M_{(n-1, 1)}$ in $X(G; x, q)$. First note that they each consist of only one term, $q^{r}$ and $q^{s}$, respectively. This is because there is only one way to color the vertices with content $(1, n-1)$, namely, the central vertex receives color $1$ and other vertices receive color $2$; the ascent number for this coloring is $r=n-j$. Similarly, there is only one way to color the vertices with content $(n-1,1)$: the central vertex receives color $2$, and other vertices receive color $1$; the ascent number for this coloring is $s=j-1$. Note that $r+s=n-1$. 

If $n$ is even, then $r\neq s$, and $X(K_{1,n-1};\mathbf{x}, q)$ is not palindromic.  If $n$ is odd, and $r\neq s$, then $X(K_{1,n-1};\mathbf{x}, q)$ is not palindromic. Thus, $r=s=\frac{n-1}{2}$ is a necessary condition for $X(K_{1,n-1};\mathbf{x}, q)$ to be palindromic. Note that $r=s=\frac{n-1}{2}$ if and only if the central vertex is labeled $\frac{n+1}{2}$. In this case, the labeled graph is invariant under the flip map of Definition~\ref{def:flip_map} and thus,  the coefficient of $q^i$ is equal to that of $q^{n-1-i}$ and $X(K_{1,n-1};\mathbf{x}, q)$ is palindromic.
\end{proof}

\begin{proposition}
The CQF of a (labeled) star graph $G$ with $n\geq 4$ vertices is never symmetric.
\end{proposition}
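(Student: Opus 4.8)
The plan is to combine Proposition~\ref{prop:star-symmetry} with the fact that every symmetric CQF is palindromic \cite[Corollary 8.5.4]{Sag20}. If $n$ is even, or if $n$ is odd but the central vertex does not carry the label $\frac{n+1}{2}$, then $X(G;\mathbf{x},q)$ is not palindromic by Proposition~\ref{prop:star-symmetry}, hence not symmetric, and we are done. The only remaining case is $n$ odd with $n\geq 5$ and central label $j=\frac{n+1}{2}$. In this case, call a leaf \emph{low} if its label lies in $\{1,\dots,j-1\}$ and \emph{high} if its label lies in $\{j+1,\dots,n\}$; there are $\frac{n-1}{2}$ leaves of each type. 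I would use two elementary facts about proper colorings of a star: the color of the central vertex is used exactly once (no neighboring leaf may repeat it), and an edge from the center to a high (resp.\ low) leaf is an ascent exactly when that leaf's color is larger than (resp.\ smaller than) the center's color.

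To prove non-symmetry I would compare the coefficients of $M_{(1,n-2,1)}$ and $M_{(1,1,n-2)}$, which are rearrangements of the same partition $(n-2,1,1)$. First I would show that $c_{(1,n-2,1)}(q)=2(n-1)\,q^{(n-1)/2}$ is a monomial: with this palette the center must take one of the two colors used once, namely the smallest or the largest available color (it cannot take the color used $n-2\geq 2$ times); if the center has the smallest color then every leaf color exceeds it, so every high-leaf edge is an ascent and every low-leaf edge a descent, forcing $\operatorname{asc}=\frac{n-1}{2}$, and symmetrically the largest center color also forces $\operatorname{asc}=\frac{n-1}{2}$; there are $2(n-1)$ such colorings (two center colors, each with $n-1$ placements of the remaining singleton leaf color).

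Next I would compute $c_{(1,1,n-2)}(q)=(n-1)\,q^{(n-1)/2}+\frac{n-1}{2}\big(q^{(n-3)/2}+q^{(n+1)/2}\big)$, which is not a monomial since $\frac{n-1}{2}>0$. Here the center can take the smallest color (contributing $(n-1)\,q^{(n-1)/2}$ exactly as above) or the \emph{middle} color; in the latter subcase the unique leaf colored with the smallest color, if placed on a low leaf, creates one extra ascent while all high-leaf edges stay ascents, giving $\operatorname{asc}=\frac{n+1}{2}$, whereas if placed on a high leaf it destroys an ascent there, giving $\operatorname{asc}=\frac{n-3}{2}$, and each possibility arises in $\frac{n-1}{2}$ ways. (For $n=5$ this reads $c_{(1,3,1)}(q)=8q^2$ versus $c_{(1,1,3)}(q)=2q+4q^2+2q^3$.) Since $(1,n-2,1)$ and $(1,1,n-2)$ are rearrangements of one another but $c_{(1,n-2,1)}(q)\neq c_{(1,1,n-2)}(q)$, the CQF is not symmetric.

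The main obstacle is the bookkeeping in the middle-color subcase of the second computation: one must carefully track how the low/high position of the singleton minimum-colored leaf shifts the ascent total and confirm the multiplicities $\frac{n-1}{2}$ — a routine but error-prone count. A subtlety worth flagging is that the argument genuinely needs $n\geq 5$: for $n=3$ the compositions $(1,n-2,1)$ and $(1,1,n-2)$ both degenerate to $(1,1,1)$, consistent with the $3$-vertex star $K_{1,2}=P_3$ possessing the symmetric natural labeling.
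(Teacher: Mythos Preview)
Your proof is correct and follows the same overall strategy as the paper: reduce via Proposition~\ref{prop:star-symmetry} to odd $n\geq 5$ with central label $\frac{n+1}{2}$, then exhibit two rearrangements of a single composition whose coefficients differ. The details, however, diverge. You compare the full polynomials $c_{(1,n-2,1)}(q)$ and $c_{(1,1,n-2)}(q)$, showing the first is the monomial $2(n-1)q^{(n-1)/2}$ while the second is not a monomial. The paper instead compares $c_{(1,2,n-3)}(q)$ and $c_{(2,1,n-3)}(q)$, and only their \emph{degrees}: with palette $(1,2,n-3)$ the center is forced to take color $1$ (the unique singleton color) and the maximum ascent number is $\frac{n-1}{2}$, whereas with palette $(2,1,n-3)$ the center is forced to take color $2$ and placing both copies of color $1$ on low leaves raises the maximum ascent number to $\frac{n-1}{2}+2$. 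The paper's route is marginally shorter since it sidesteps computing full coefficient polynomials; yours has the advantage of being fully explicit and internally checkable (your $n=5$ verification is a good sanity test). Your bookkeeping in the middle-color subcase is accurate, and your remark about the $n=3$ degeneracy is apt.
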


\begin{proof}
Symmetric CQFs are palindromic by \cite[Corollary 2.8]{SW16}. Thus, by Proposition~\ref{prop:star-symmetry}, we need only show that the star graph on $n$ vertices (for $n$ odd) and central vertex labeled $\frac{n+1}{2}$ does not have a symmetric CQF.  Using the content $(1, 2, n-3)$, the maximum ascent number is ${\frac{n-1}{2}}$; indeed, with the central vertex colored $1$, there is an ascent for each edge $e=(\frac{n+1}{2})i$ with vertex $i>\frac{n+1}{2}$. However, for the content $(2, 1, n-3)$, the maximum ascent number is ${\frac{n-1}{2}+2}$. This ascent number is achieved by coloring the central vertex with $2$, and two vertices with labels smaller than $\frac{n+1}{2}$ with color $1$, obtaining two additional ascents. Thus, the coefficient of $M_{(1, 2, n-3)}$ is a polynomial in $q$ of degree ${\frac{n-1}{2}}$, but the coefficient of $M_{(2, 1, n-3)}$ is a polynomial in $q$ of degree ${\frac{n-1}{2}+2}$, and so $X(K_{1,n-1};\mathbf{x}, q)$ is not symmetric.
\end{proof}

We also have the following general result on bipartite graphs.

\begin{proposition} Let $G$ be a connected labeled bipartite graph with an odd number of edges and unequal bipartition, that is, $G=A\cup B$ for two independent sets $A$ and $B$ with $|A|\neq |B|$. Then, the CQF of $G$ is not palindromic, and in particular, is not symmetric.
\end{proposition}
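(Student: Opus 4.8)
The plan is to mimic the structure of the preceding propositions by exhibiting a single composition $\alpha$ for which the coefficients of $M_\alpha$ and $M_{\alpha^r}$ in $X(G;\mathbf{x},q)$ differ, which by the palindromicity criterion established after Proposition~\ref{prop:sum_Q_Q'} suffices. Without loss of generality assume $|A| < |B|$, and set $a = |A|$, $b = |B|$, so $a + b = n$ and $a < b$. The natural candidate is $\alpha = (a, b)$, recording colorings that use color $1$ exactly $a$ times and color $2$ exactly $b$ times.

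First I would show that the coefficient of $M_{(a,b)}$ is a single monomial $q^{m}$ where $m$ is the number of edges of $G$: since $G$ is connected and bipartite with parts $A$ and $B$ of sizes $a$ and $b$, the only proper coloring with palette $(a,b)$ is the one assigning color $1$ to all of $A$ and color $2$ to all of $B$ (any proper $2$-coloring of a connected bipartite graph is determined up to swapping the two colors, and the palette $(a,b)$ with $a \ne b$ breaks the symmetry, forcing color $1$ on the smaller part $A$). For this coloring, an edge $ij$ with $i < j$ is an ascent precisely when the smaller-labeled endpoint lies in $A$; so the ascent number is some fixed integer $p$, and $c_{(a,b)}(q) = q^{p}$. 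Symmetrically, the coefficient of $M_{(b,a)}$ is $q^{p'}$, where $p'$ is the ascent number of the coloring that assigns color $1$ to all of $B$ and color $2$ to all of $A$; here an edge $ij$ with $i<j$ is an ascent precisely when its smaller-labeled endpoint lies in $B$. Since every edge of $G$ has one endpoint in $A$ and one in $B$, exactly one of the two colorings counts each edge $ij$ as an ascent (according to which side the smaller label $i$ is on), so $p + p' = m$, the total number of edges.

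It then remains to observe that $p \ne p'$. This is where the hypothesis that $m$ is odd does the work: if $p = p'$ then $m = 2p$ is even, a contradiction. Hence $p \ne p'$, so $c_{(a,b)}(q) = q^p \ne q^{p'} = c_{(b,a)}(q)$, and since $(b,a) = (a,b)^r$, the CQF $X(G;\mathbf{x},q)$ fails the necessary condition for palindromicity, and is therefore not palindromic; by \cite[Corollary 8.5.4]{Sag20} it is not symmetric either.

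The only genuine subtlety — the step I expect to require the most care — is the uniqueness claim for the palette $(a,b)$ coloring: one must invoke connectedness of $G$ to know the bipartition $\{A,B\}$ is the unique proper $2$-coloring up to swapping colors, and then use $a \ne b$ to pin down which part receives color $1$. Everything else is a bookkeeping argument about which endpoint of each edge carries the smaller label, together with the parity observation $p + p' = m$ odd. Note also that the hypothesis $|A| \ne |B|$ is exactly what is needed to make $M_{(a,b)}$ and $M_{(b,a)}$ distinct monomial quasisymmetric functions and to force the coloring to be unique; if $|A| = |B|$ there would be two colorings with that palette and the argument would break down.
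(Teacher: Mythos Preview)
Your approach is essentially identical to the paper's: compare the coefficients of $M_{(a,b)}$ and $M_{(b,a)}$, observe each is a single power of $q$ coming from the unique proper $2$-coloring with that palette (uniqueness via connectedness and $a\neq b$), note the two exponents sum to $|E(G)|$, and invoke oddness to conclude they differ. One slip to clean up: your opening sentence asserts the coefficient of $M_{(a,b)}$ is $q^{m}$ with $m=|E(G)|$, which is not what you mean---you immediately (and correctly) revise this to $q^{p}$ for the actual ascent number $p$, so just fix that first clause.
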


\begin{proof}
Let $a=|A|$ and $b=|B|$; it is given that $a\neq b$. The coefficient of $M_{(a, b)}$ in $X(G; \mathbf{x}, q)$ consists of a single term $q^{r}$, where $r$ equals the number of edges $ij$ such that $i\in A$ and $j\in B$ with $i<j$. Similarly, the coefficient of $M_{(b, a)}$ in $X(G; \mathbf{x}, q)$ consists of a single term $q^{s}$, where $s$ equals the number of edges $ij$ such that $i\in A$ and $j\in B$ with $i>j$. Since $r+s=|E(G)|$ is odd, it follows that $r\neq s$. As the coefficients of $M_{(a, b)}$ and $M_{(b, a)}$ differ, $X(G; \mathbf{x}, q)$ is not palindromic.
\end{proof}

\begin{corollary}\label{cor:nonsymmetric-tree} Let $T$ be a labeled tree with an even number of vertices with unequal bipartition. Then, the CQF of $T$ is not palindromic, and in particular, is not symmetric. 
\end{corollary}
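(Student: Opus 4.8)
The plan is to obtain this corollary as an immediate special case of the preceding proposition on connected bipartite graphs. The first step is to recall that every tree is bipartite: a tree contains no cycles at all, hence no odd cycles, and the standard parity argument (root the tree and color each vertex according to the parity of its distance to the root) produces a bipartition $T = A \cup B$ into two independent sets; since $T$ is connected this bipartition is unique up to swapping $A$ and $B$, so the phrase ``unequal bipartition'' in the hypothesis is unambiguous. Trees are connected by definition, so the ``connected bipartite graph'' hypothesis of the previous proposition is automatically satisfied.

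The second step uses the elementary fact that a tree on $n$ vertices has exactly $n-1$ edges. By hypothesis $n$ is even, so $|E(T)| = n-1$ is odd; thus $T$ is a connected labeled bipartite graph with an odd number of edges and, again by hypothesis, with $|A| \neq |B|$. All hypotheses of the preceding proposition are therefore met, and we conclude that the CQF of $T$ is not palindromic. Since every symmetric CQF is palindromic by \cite[Corollary 8.5.4]{Sag20}, it follows in particular that the CQF of $T$ is not symmetric.

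There is no real obstacle here: the only point that deserves to be stated explicitly is the (well-known) bipartiteness of trees together with the edge count $n-1$, after which the claim is a direct invocation of the previous proposition applied to $G = T$.
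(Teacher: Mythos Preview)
Your proof is correct and follows exactly the intended route: the paper states this corollary without proof, treating it as immediate from the preceding proposition, and your argument spells out precisely the two observations needed (trees are connected bipartite graphs, and an even number of vertices forces an odd number of edges).
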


With this in mind, we therefore conclude with the following question, which was recently answered in \cite{GPS24}. 
\begin{question}

For which labeled trees $T$ is $X(T; \mathbf{x},q)$ symmetric?
\end{question}

In \cite{GPS24}, the authors prove that the path graph $P_n$ with the natural labeling is the \emph{only} tree with symmetric CQF.

The reader may wonder how our paper and \cite{GPS24} differ and coincide. As noted earlier, both papers contain a version of Proposition~\ref{prop:LURL}. Our map in the proof of Proposition~\ref{prop:regular} is a specialization of \cite[Definition 4.7]{GPS24} with $k=1$. Beyond that, the proof methods are different, and all results were arrived at independently. It is interesting that in both papers,  enumerating the multiplicity of $q^{|E|}$ in a CQF is used to determine whether a CQF is symmetric.

\section{Statements and Declarations}

\textbf{Conflict of Interest Statement.} The authors collectively declare that there is no conflict of interest arising from this work.

\medskip 

\textbf{Data Availability Statement.} No data sets were used in the current manuscript.

\bibliographystyle{alpha}
\bibliography{biblio}

\end{document}